\documentclass[11pt,letterpaper]{amsart}

\usepackage[T1]{fontenc}
\usepackage{lmodern}
\usepackage{microtype}
\usepackage{mathtools,amssymb,amsfonts,amsthm}
\usepackage{enumitem}
\usepackage{booktabs,tabularx,array}
\usepackage{xcolor}
\usepackage[pagebackref]{hyperref}
\usepackage[capitalize,noabbrev]{cleveref}

\numberwithin{equation}{section}
\setlist[itemize]{leftmargin=2em,itemsep=2pt,topsep=4pt}
\setlist[enumerate]{leftmargin=2.5em,itemsep=3pt,topsep=4pt}
\hypersetup{
  colorlinks=true,
  linkcolor=blue,
  citecolor=red,
  urlcolor=cyan,
  pdftitle={Failure of the proposed local Arnold-multiplicity decay formula under twisted Kahler-Ricci flow},
  pdfauthor={Xiangsen Qin},
  pdfsubject={Arnold multiplicities of maximal weak solutions of the twisted Kahler-Ricci flow},
  pdfkeywords={twisted Kahler-Ricci flow, Arnold multiplicity, divisorial Zariski decomposition, Hirzebruch surface}
}

\newtheorem{theorem}{Theorem}[section]
\newtheorem{proposition}[theorem]{Proposition}
\newtheorem{lemma}[theorem]{Lemma}
\newtheorem{corollary}[theorem]{Corollary}
\newtheorem{question}[theorem]{Question}
\theoremstyle{definition}
\newtheorem{definition}[theorem]{Definition}
\theoremstyle{remark}
\newtheorem{remark}[theorem]{Remark}

\newcommand{\CC}{\mathbb C}
\newcommand{\PP}{\mathbb P}
\newcommand{\PSH}{\operatorname{PSH}}
\newcommand{\Supp}{\operatorname{Supp}}
\newcommand{\vol}{\operatorname{vol}}
\newcommand{\pr}{\operatorname{pr}}
\newcommand{\ddc}{dd^c}
\newcommand{\lct}{c}
\newcommand{\arn}{\lambda}
\newcommand{\J}{\mathcal J}
\newcommand{\E}{\mathcal E}
\begin{document}

\title[Failure of the proposed local decay formula]
{Failure of the Proposed Local Decay Formula for Local Arnold Multiplicities
under Twisted K\"ahler--Ricci Flow}

\author[X. Qin]{Xiangsen Qin}

\address{Xiangsen Qin: \ Chern Institute of Mathematics and LPMC, Nankai University \\
Tianjin 300071, China}

\email{qinxiangsen@nankai.edu.cn}

\subjclass[2020]{32U05, 32U20, 32W20, 14C20, 53E20}
\keywords{twisted K\"ahler--Ricci flow, Arnold multiplicity,
divisorial Zariski decomposition, Hirzebruch surface}

\begin{abstract}
Let $\lambda(u,x)$ be the local Arnold multiplicity of a
quasi-plurisubharmonic function $u$.  Di Nezza--Guedj--Lu asked whether
every maximal weak solution $\varphi_t$ of the twisted K\"ahler--Ricci flow
satisfies
$\lambda(\varphi_t,x)=\max\{\lambda(\varphi_0,x)-t,0\}$.  We give
counterexamples on the Hirzebruch surface
$\mathbb F_e=\mathbb P_{\mathbb P^1}
(\mathcal O_{\mathbb P^1}\oplus\mathcal O_{\mathbb P^1}(-e))$, $e\ge2$.
Let $S$ be its negative section and $F_1,\ldots,F_k$ be distinct fibres.
If $a,b_i>0$, $\sum_i b_i>ea$, and the initial current is
$a[S]+\sum_i b_i[F_i]$, then
$\lambda(\varphi_t,x)=a-\min\{k/e,1\}t$ for
$x\in S\setminus\bigcup_iF_i$ and
$0<t<\min\{a,b_1,\ldots,b_k\}$.  Thus the formula fails for $k<e$; the
slower decay is forced by the Zariski negative part of the residual class.
Under explicit SNC and positivity hypotheses, we also prove an exact
surface formula for pure divisorial data.  Consequences include nonlocality
with fixed background data and explicit multiplier ideals.  Products with
smooth factors give counterexamples in every complex dimension $n\ge2$.
\end{abstract}

\maketitle
\raggedbottom
\section{Introduction}
\label{sec:introduction}

Let $(X,\omega)$ be a compact K\"ahler manifold of complex dimension $n$,
and let $dV_X$ be a fixed smooth positive reference volume form.  No
relation between $dV_X$ and the volume form of $\omega$ is imposed.  From
an $\omega$-plurisubharmonic initial potential $\varphi_0$, we consider the
maximal weak solution $(\varphi_t)$ of the parabolic complex Monge--Amp\`ere
equation
\begin{equation}\label{eq:flow-intro}
 (\omega+\ddc\varphi_t)^n=\exp(\dot\varphi_t)dV_X,
 \qquad \dot\varphi_t:=\frac{\partial\varphi_t}{\partial t}.
\end{equation}
Set $\omega_t:=\omega+\ddc\varphi_t$, the evolving closed positive real
$(1,1)$-current, and put $\chi:=\operatorname{Ric}(dV_X)$, the Ricci form of the reference
volume form.  On the regular locus, where $\omega_t$ is a K\"ahler form,
\eqref{eq:flow-intro} is equivalent to
\begin{equation}\label{eq:twisted-flow-intro}
 \frac{\partial\omega_t}{\partial t}
 =-\operatorname{Ric}(\omega_t)+\chi.
\end{equation}
The fixed closed form $\chi$ is the \emph{twisting form}, so
\eqref{eq:twisted-flow-intro} is called the
\emph{twisted K\"ahler--Ricci flow}.  We use the term \emph{maximal weak
solution} for the monotone solution obtained from decreasing smooth
approximants.  The normalization of $\ddc$, the local definition of
$\operatorname{Ric}(dV_X)$, the construction of maximal weak solutions, and
the analytic, cohomological, and divisor conventions used below are collected
in \cref{sec:conventions}.  

For a quasi-plurisubharmonic function $u$, the \emph{local complex
singularity exponent} and its reciprocal, the \emph{local Arnold
multiplicity}, are
\[
 c(u,x)=\sup\{q>0:\exp(-qu)\in L^1_{\rm loc}\text{ near }x\},
 \qquad \arn(u,x)=c(u,x)^{-1}.
\]
We use the reciprocal convention $1/(+\infty)=0$.  The \emph{global Arnold
multiplicity} is $\arn(u):=\sup_{x\in X}\arn(u,x)$.

When $0<\arn(\varphi_0)<+\infty$, Di Nezza--Guedj--Lu proved the two-sided estimate
\begin{equation}\label{eq:DGL-two-sided}
 \max\{\arn(\varphi_0,x)-t,0\}
 \le \arn(\varphi_t,x)
 \le \max\!\left\{\arn(\varphi_0,x)
 \left(1-\frac{t}{\arn(\varphi_0)}\right),0\right\},
\end{equation}
see \cite[Theorem~1.9]{DGL26geometry}.  Question~1.10 of
Di Nezza--Guedj--Lu, restated below as \cref{q:DGL-decay-formula}, asks whether
the lower estimate is always an equality.

\begin{question}\label{q:DGL-decay-formula}
Does every maximal weak solution $(\varphi_t)$ satisfy
\begin{equation}\label{eq:unit-slope-question}
 \arn(\varphi_t,x)=\max\{\arn(\varphi_0,x)-t,0\}
\end{equation}
for every $x\in X$ and every time for which the flow is defined?
\end{question}

Di Nezza--Guedj--Lu prove \eqref{eq:unit-slope-question} on compact
Riemann surfaces \cite[Lemma~2.1]{DGL26geometry}, so Question~1.10 has an
affirmative answer in complex dimension one.  We construct a counterexample
in complex dimension two using only a current lower bound; the exact formula
below is not needed.  The product theorem then transports the example to
every dimension $n\ge2$.

On an admissible principalizing log resolution,
\cite[Lemma~3.2]{DGL26geometry} extracts the prescribed divisorial terms
and leaves a positive residual current in an explicit cohomology class.
Boucksom's minimal multiplicities force this current to dominate the
negative part of the divisorial Zariski decomposition, producing a lower
bound larger than \eqref{eq:unit-slope-question} predicts.  This mechanism
was anticipated in \cite[Lemma~4.7 and Remark~4.8]{DGL26geometry}.
Proposition~3.6 there gives an exact decomposition when the residual class
is nef and big, but it does not cover the non-nef classes used here.  We
show that the additional divisorial term changes the local Arnold
multiplicity along the negative section and compute the resulting profile
on Hirzebruch surfaces.  The structural bound in \cref{thm:dynamic} follows
from the known residual-current decomposition and Boucksom--Siu theory.
These examples do not contradict the proved higher-dimensional estimates;
they refute only the pointwise identity in Question~1.10.

Fix an integer $e\ge2$ and adopt Grothendieck's quotient convention for
projective bundles.  For a divisor $D$, we write $[D]$ for its integration
current and $\{D\}$ for its real $(1,1)$-cohomology class.  Let
\[
 \pi_e:\mathbb F_e:=\mathbb P_{\mathbb P^1}
 (\mathcal O_{\mathbb P^1}\oplus\mathcal O_{\mathbb P^1}(-e))
 \longrightarrow\mathbb P^1
\]
be the natural ruling of the $e$-th Hirzebruch surface.  Let
$S\subset\mathbb F_e$ be the section
induced by the quotient onto $\mathcal O_{\mathbb P^1}(-e)$; it is the
negative section and satisfies $S^2=-e$.  For $p\in\mathbb P^1$, let
$F_p=\pi_e^{-1}(p)$ be the corresponding fibre divisor, and set
$\mathfrak f:=\{F_p\}\in H^{1,1}(\mathbb F_e,\mathbb R)$, independently
of $p$.  Then $\{S\}\cdot\mathfrak f=1$ and $\mathfrak f^2=0$.  In
particular, $(\{S\}+\mathfrak f)\cdot\{S\}=1-e<0$, so this residual class
is not nef.

\begin{theorem}
\label{thm:robust-counterexample}
Let $a,b>0$ satisfy $b>ea$, and fix a fibre $F_0$ of
$\pi_e$.  Fix a
K\"ahler form $\omega$ on
$\mathbb F_e$ with $\{\omega\}=a\{S\}+b\mathfrak f$, fix a smooth
positive volume form $dV_X$, and choose
$\varphi_0\in\PSH(\mathbb F_e,\omega)$ such that
\begin{equation}\label{eq:robust-initial}
 \omega+\ddc\varphi_0=a[S]+b[F_0].
\end{equation}
Then the maximal weak solution $(\varphi_t)$ satisfies
\begin{equation}\label{eq:robust-lower-bound}
 \arn(\varphi_t,x)\ge a-\frac{t}{e}>a-t
 =\arn(\varphi_0,x)-t
\end{equation}
for every $x\in S\setminus F_0$ and $0<t<a$.
\end{theorem}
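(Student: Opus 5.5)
The plan is to find a closed positive current that $\omega_t=\omega+\ddc\varphi_t$ must dominate, and then read off the Arnold multiplicity along $S$ from its Lelong number there. The starting point is the residual-current decomposition of Di Nezza--Guedj--Lu \cite[Lemma~3.2]{DGL26geometry}. Here $[S]$ and $[F_0]$ are smooth divisors meeting transversally, so $\mathbb F_e$ is already an admissible log resolution and no blow-up is needed. For $0<t<\min\{a,b\}$ the lemma should give
\[
 \omega_t=(a-t)[S]+(b-t)[F_0]+T_t ,
\]
with $T_t\ge0$ closed positive. Using $\{\omega_t\}=\{\omega\}-t\{\chi\}$ and $c_1(\mathbb F_e)=2\{S\}+(e+2)\mathfrak f$, the class of $T_t$ is
\[
 \{T_t\}=t\bigl(\{S\}+\mathfrak f\bigr)
\]
(possibly plus a class controlled by $dV_X$). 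I will check this against the conventions of \cref{sec:conventions}; if the class comes out differently, the argument below adapts as long as it is a positive multiple of $\{S\}+\mathfrak f$ up to nef terms.

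The second step applies Boucksom's divisorial Zariski decomposition to $\alpha=\{S\}+\mathfrak f$. Since $\alpha\cdot\{S\}=1-e<0$, the curve $S$ lies in the negative part. Solving $(\alpha-c\{S\})\cdot\{S\}=0$ gives $1-e+ce=0$, so $c=(e-1)/e$. The positive part is then $\tfrac1e\{S\}+\mathfrak f$, which is nef because
\[
 \Bigl(\tfrac1e\{S\}+\mathfrak f\Bigr)\cdot \{S\}=0,\qquad \Bigl(\tfrac1e\{S\}+\mathfrak f\Bigr)\cdot\mathfrak f=\tfrac1e\ge0 .
\]
By Boucksom's minimal multiplicity theory, every closed positive current in $\alpha$ has generic Lelong number at least $\nu(\alpha,S)=(e-1)/e$ along $S$. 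By Siu decomposition, $T_t\ge t\frac{e-1}{e}[S]$.

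Combining the two steps,
\[
 \omega_t\ge \Bigl(a-t+t\tfrac{e-1}{e}\Bigr)[S]=\Bigl(a-\tfrac te\Bigr)[S].
\]
Hence $\varphi_t$ locally near $x\in S$ is bounded above by $(a-t/e)\log|s|$ plus a bounded term, where $s$ is a local defining function of $S$. By monotonicity of integrability, $c(\varphi_t,x)\le (a-t/e)^{-1}$, so $\arn(\varphi_t,x)\ge a-t/e$. The strict inequality $a-t/e>a-t$ holds because $e\ge2$ and $t>0$. The equality $\arn(\varphi_0,x)=a$ for $x\in S\setminus F_0$ follows from the local form $a\log|s|$ of $\varphi_0$ there.

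The range $t<a$ is stated, but the decomposition lemma may require $t<\min\{a,b\}$. Since $b>ea>a$, this causes no problem.

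The main obstacle is verifying that \cite[Lemma~3.2]{DGL26geometry} applies to this maximal weak solution with exactly the coefficients $a-t$ and $b-t$ and the stated residual class. If the lemma only provides $\omega_t\ge(a-t)[S]+(b-t)[F_0]$, I will set $T_t:=\omega_t-(a-t)[S]-(b-t)[F_0]$, which is positive by Siu, and compute its class directly, so that the argument does not depend on the exact form of the lemma. The Boucksom step is standard once the class is identified.
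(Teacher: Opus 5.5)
Your later steps coincide with the paper's proof: you extract $(a-t)[S]+(b-t)[F_0]$, observe that the remainder is a positive current in $t(\{S\}+\mathfrak f)$, use homogeneity and Boucksom domination with $N(\{S\}+\mathfrak f)=(1-\frac1e)S$, and finish with \cref{lem:SNC}. The first step, however, has a genuine gap.

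\textbf{The persistence inequality for real weights.} The decomposition of \cite[Lemma~3.2]{DGL26geometry} is only available for initial potentials with DGL-compatible analytic singularities. These are locally $\frac1{m_0}\log\sum_\ell|f_\ell|^2+h$ with $m_0\in\mathbb Z_{\ge1}$, which here forces $a,b\in\frac1{m_0}\mathbb Z_{>0}$. In that case (e.g.\ $a=1$, $b=e+1$) your route is exactly \cref{thm:dynamic} with $Y=X$, $A_j=1$, $R_{\pi,0}=0$, and it is fine. The theorem, however, allows arbitrary real $a,b>0$. If $a/b\notin\mathbb Q$, then $\varphi_0$ does not even have analytic singularities at $S\cap F_0$. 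Your fallback (``if the lemma only provides $\omega_t\ge(a-t)[S]+(b-t)[F_0]$'') still presupposes that the lemma applies. So for general weights nothing in your proposal proves the persistence inequality.

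The paper closes this with \cref{lem:real-persistence}:
\begin{enumerate}
\item Let $\mu(t)$ be the generic Siu coefficient of $\omega_t$ along $S$.
\item By Siu's analyticity theorem and Baire, there is $y\in S\setminus F_0$ at which the Siu residual of $\omega_t$ has zero Lelong number.
\item \Cref{lem:SNC} then gives $\arn(\varphi_0,y)=a$ and $\arn(\varphi_t,y)=\mu(t)$.
\item The pointwise lower bound of \cite[Theorem~1.9]{DGL26geometry} needs only $0<\arn(\varphi_0)<+\infty$, with no coefficient restriction. It yields $\mu(t)\ge a-t$, and likewise $b-t$ along $F_0$.
\end{enumerate}
An alternative fix is to bound $\varphi_0$ from above by DGL-compatible data with rational $a'\le a$, $b'\le b$, $b-b'\ge e(a-a')$. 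One then uses monotonicity of maximal solutions and lets $a'\to a$, $b'\to b$.

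\textbf{The cohomology class.} Your computation is wrong as written, even though the final class is right. With $\omega_t=\omega+\ddc\varphi_t$, the class is frozen: $\{\omega_t\}=\{\omega\}$ for all $t$, since $\chi=\operatorname{Ric}(dV_X)$ represents $c_1(X)$ and cancels the Ricci term. Using $\{\omega_t\}=\{\omega\}-t\{\chi\}$ with $c_1(\mathbb F_e)=2\{S\}+(e+2)\mathfrak f$ would give $\{T_t\}=-t\{S\}-(e+1)t\mathfrak f$, which is not pseudoeffective. The correct computation is simply
\[
\{T_t\}=a\{S\}+b\mathfrak f-(a-t)\{S\}-(b-t)\mathfrak f=t(\{S\}+\mathfrak f),
\]
with no correction term. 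This matters, because your safety net ``up to nef terms'' is false: adding a nef class can destroy the negative part. For instance, $t(\{S\}+\mathfrak f)+(e-1)t\mathfrak f=t(\{S\}+e\mathfrak f)$ is nef.

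\textbf{Normalization.} In the paper's convention $[S]=\ddc\log|s|^2$. The local form is therefore $(a-\frac te)\log|s|^2$ plus a term bounded above (not bounded). With $\log|s|$ you would only obtain $\arn(\varphi_t,x)\ge\frac12(a-\frac te)$.
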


This conclusion uses neither
Berman--Guenancia regularity nor the comparison argument in
\cref{lem:finite-comparison}.

We next state the exact Arnold-multiplicity formula.  If a big class
$\beta$ on a compact K\"ahler surface has divisorial Zariski decomposition
$\beta=P+\{N\}$, then $P$ is its nef positive class and $N$ is its
effective real negative divisor.

\begin{theorem}
\label{thm:surface-exactness}
Let $D=D_1+\cdots+D_{r_0}$ be a reduced SNC divisor on a compact K\"ahler
surface $X$, and fix a smooth positive volume form $dV_X$.  Let
$m_1,\ldots,m_{r_0}>0$ and suppose that a K\"ahler form $\omega$ and an
initial potential $\varphi_0\in\PSH(X,\omega)$ satisfy
\begin{equation}\label{eq:pure-initial}
 \{\omega\}=\sum_{i=1}^{r_0}m_i\{D_i\},
 \qquad
 \omega+\ddc\varphi_0=\sum_{i=1}^{r_0}m_i[D_i].
\end{equation}
Put $\beta=\sum_i\{D_i\}$, and write its divisorial Zariski decomposition as
\begin{equation}\label{eq:intro-good-ZD}
 \beta=P+\{N\},\qquad N=\sum_G\eta_GG,
\end{equation}
where $G$ ranges over the prime components of $N$ and $\eta_G>0$; set
$N_{\rm red}:=\sum_GG$.
Assume that:
\begin{enumerate}[label=\textup{(Z\arabic*)}]
\item $P$ has a smooth semipositive representative $\theta_P$ with
      $\int_X\theta_P^2>0$;
\item $D+N_{\rm red}$ has SNC support;
\item there is an auxiliary effective real divisor $E_{\rm aux}$ such that
      $P-\{E_{\rm aux}\}$ is a K\"ahler class,
      $D+N_{\rm red}+(E_{\rm aux})_{\rm red}$ has SNC support, and
      $\Supp E_{\rm aux}\subset\Supp D\cup\Supp N$.
\end{enumerate}
Let $\mathcal G$ be the set of prime components of $D+N_{\rm red}$.  For
$G\in\mathcal G$, set
\begin{equation}\label{eq:surface-coefficients}
 m_G:=\operatorname{coeff}_G\!\left(\sum_i m_iD_i\right),\qquad
 d_G:=\operatorname{coeff}_G(D),\qquad
 \eta_G:=\operatorname{coeff}_G(N),
\end{equation}
and put $\tau_0:=\min_i m_i$.
Then the maximal weak solution $(\varphi_t)$ from $\varphi_0$ satisfies
\begin{equation}\label{eq:surface-exact-profile}
 \arn(\varphi_t,x)=
 \max\Bigl(\{0\}\cup
 \{m_G-td_G+t\eta_G:G\in\mathcal G,\ x\in G\}\Bigr)
\end{equation}
for every $x\in X$ and $0<t<\tau_0$.
\end{theorem}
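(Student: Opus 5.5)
The plan is to prove matching lower and upper bounds for $\arn(\varphi_t,x)$, using the structure of the maximal weak solution for $0<t<\tau_0$. The cohomology class of $\omega_t$ is $\{\omega\}+t(\{\chi\}-c_1)$, which with the normalization $\{\omega_t\}=\{\omega\}-t\,c_1(X)+t\{\chi\}$ from \cref{sec:conventions} reduces to a fixed smooth part minus $t$ times nothing singular. Since $D$ is already SNC, no resolution is needed: the admissible principalizing log resolution of \cite[Lemma~3.2]{DGL26geometry} can be taken to be the identity. That lemma then gives the decomposition
\[
\omega_t=\sum_i(m_i-t)[D_i]+R_t ,
\]
with $R_t$ a closed positive current in the class $\{\omega\}-t\sum_i\{D_i\}+t\{\text{smooth}\}$. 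Up to a smooth term this is $t\beta$ plus a Kähler-type remainder.

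For the \textbf{lower bound}, I follow the mechanism of \cref{thm:robust-counterexample}. By Boucksom's minimal multiplicities, every closed positive current in a class of the form $t\beta+(\text{smooth})$ dominates the negative part, so $R_t\ge t[N]$ up to the effect of the smooth correction. The smooth part must be absorbed using (Z3): since $P-\{E_{\rm aux}\}$ is Kähler, a small multiple of $E_{\rm aux}$ lets the smooth error be dominated, and letting the multiple tend to zero recovers the exact coefficients $\eta_G$. This gives $\omega_t\ge\sum_G(m_G-td_G+t\eta_G)[G]$. The local Arnold multiplicity of a current dominating $c[G]$ at a point of $G$ is at least $c$, by restriction to a transversal disc or by Skoda/Siu-type arguments. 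Taking the maximum over components through $x$ gives the inequality $\ge$ in \eqref{eq:surface-exact-profile}.

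For the \textbf{upper bound}, I construct a competitor subsolution, or use the comparison principle (\cref{lem:finite-comparison}), to show that $\varphi_t$ is no more singular than
\[
\sum_G(m_G-td_G+t\eta_G)\log|s_G|^2+O(1)
\]
near points of $D\cup N$, and bounded elsewhere. Write $P=\{\theta_P\}$ using (Z1), with $\theta_P$ smooth semipositive and of positive volume. Then $\omega_t$'s class splits as $\sum_G(m_G-td_G+t\eta_G)\{G\}+tP+(\text{Kähler for }t<\tau_0)$. The function
\[
\psi_t=\sum_G(m_G-td_G+t\eta_G)\log|s_G|_{h_G}^2+\text{bounded }\omega\text{-psh}
\]
then satisfies a Monge--Ampère inequality of the form $(\omega_t\text{-current})^2\ge e^{\dot\psi}dV$, with density $\prod|s_G|^{-2\cdot(\ldots)}$-type integrable behaviour because of the SNC condition (Z2). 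The positivity of $\int\theta_P^2$ is what allows a bounded solution of the absolutely continuous part, via Eyssidieux--Guedj--Zeriahi or Berman--Guenancia-type regularity. By maximality of the weak solution, $\varphi_t\ge\psi_t-C$. Along an SNC divisor the singularity exponent of $\sum c_G\log|s_G|^2$ is computed monomially. Since the coefficients here are at most $1$-scale? No: it is additive, so in fact at a double point $x\in G\cap G'$ the monomial $|z|^{-2c/\lambda}|w|^{-2c'/\lambda}$ is integrable iff $\lambda>\max(c,c')$. Hence $\arn=\max$ of the coefficients, matching the formula.

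The \textbf{main obstacle} is the upper bound: producing a genuine subsolution of the parabolic equation with exactly the coefficients $m_G-td_G+t\eta_G$, uniformly in $t\in(0,\tau_0)$. The difficulty is that $P$ is only semipositive. I would first try to perturb by $E_{\rm aux}$, replacing $tP$ by the Kähler class $t(P-\{E_{\rm aux}\})$ at the cost of adding $t\epsilon E_{\rm aux}$ to the singular part. Then I would let $\epsilon\to0$, noting that $\Supp E_{\rm aux}\subset\Supp D\cup\Supp N$ only perturbs the existing coefficients by $O(\epsilon)$. The exponent then follows in the limit.
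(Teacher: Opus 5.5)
Your proposal has a genuine gap in the upper bound, and the lower bound, though its conclusion is right, is justified incorrectly.

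\textbf{Upper bound: the subsolution you describe cannot exist.} Take the exact coefficients $a_G(t)=m_G-td_G+t\eta_G$ and any candidate $\psi_t=\sum_G a_G(t)\ell_G+b_t$. Then $\dot\psi_t=\sum_G(\eta_G-d_G)\ell_G+\dot b_t$. So along every component $G$ of $D$ with $\eta_G=0$, the measure $e^{\dot\psi_t}dV_X$ carries the factor $|s_G|^{-2}$. This is a log-canonical (coefficient-one) density and is not locally integrable; SNC does not change this, so ``integrable behaviour because of (Z2)'' is false.

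Since $(\omega+\ddc\psi_t)^2$ has mass at most $\vol(\{\omega\})$ off $\Supp D\cup\Supp N$, integrate the subsolution inequality over $s\in[\delta,\tau]$ near a generic point of $G$. Jensen's inequality, $\int_\delta^\tau e^{\dot b_s}ds\ge(\tau-\delta)e^{(b_\tau-b_\delta)/(\tau-\delta)}$, then shows that $b$ cannot be bounded on $[\delta,\tau]\times X$. For the same reason EGZ-type $L^\infty$ bounds, which need an $L^p$ density with $p>1$, are unavailable.

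Your $E_{\rm aux}$-perturbation does not cure this. It addresses the degeneracy of $\theta_P$, and the $E_{\rm aux}$ of (Z3) need not contain the components of $D$ outside $N$: in the Hirzebruch application $E_{\rm aux}$ is $\varepsilon S$ or $0$, never a fibre.

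The paper handles this with an unbounded, cusp-type remainder. It takes $t\widehat\rho$, where $\rho\in\E^1(X,\Theta_D)$ solves $\langle(\Theta_D+\ddc\rho)^2\rangle=e^{\rho}dV_X/\prod_i|s_i|_{h_i}^2$ and $\widehat\rho=\rho-\ell_N$. It then proves two further facts, neither of which appears in your plan:
\begin{itemize}
\item $\nu_x(\widehat\rho)=0$ at every point (\cref{lem:fixed-part}, via DDL18). This is needed so that the Skoda--H\"older half of \cref{lem:SNC} gives $\arn(\Psi_t,x)=\max_G a_G(t)$.
\item $\widehat\rho$ is smooth off $\Supp D\cup\Supp N$, by Berman--Guenancia. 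That result allows coefficient one, and this is where (Z1)--(Z3) are used. Smoothness is needed so that $\Psi_t=\sum_i(m_i-t)\ell_i+t\rho+2(t\log t-t)+C_0$ solves the flow classically on the complement and \cref{lem:finite-comparison} applies.
\end{itemize}
Your perturbative route might alternatively be repaired, though this is unchecked. Enlarge $E_{\rm aux}$ by $\delta D$; by openness of the K\"ahler cone, $P-\{E_{\rm aux}\}$ stays K\"ahler. Then every density exponent drops strictly below one, and an explicit conic-model potential, bounded and smooth on the complement, should give a subsolution. Finally let $\epsilon\to0$. As written, however, this step is precisely what is missing.

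\textbf{Lower bound.} By \eqref{eq:pure-initial}, the residual class is exactly $\{\omega\}-\sum_i(m_i-t)\{D_i\}=t\beta$; there is no smooth or K\"ahler correction. If there were one, ``absorbing'' it via (Z3) would not work: $N(t\beta+\kappa)$ can be strictly smaller than $tN$, and $E_{\rm aux}$ plays no role in Boucksom domination. With the correct class, $N(t\beta)=tN$ and \eqref{eq:Bou-domination} give $\omega+\ddc\varphi_t\ge\sum_i(m_i-t)[D_i]+t[N]$ directly.

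Also, \cite[Lemma~3.2]{DGL26geometry} requires DGL-compatible analytic singularities, whereas the theorem allows arbitrary real $m_i>0$. The paper instead obtains $\omega+\ddc\varphi_t\ge\sum_i(m_i-t)[D_i]$ from \cref{lem:real-persistence}. That lemma applies the DGL pointwise lower estimate at generic points of each $D_i$ and then uses the Siu decomposition.
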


The reverse inequality in \cref{thm:surface-exactness} is the delicate part.
It uses a weighted full-mass elliptic potential, removes its fixed Zariski
part without changing the non-pluripolar product or the total mass, and
applies \cite[Theorem~4.6]{BG14} only after matching each hypothesis.  The
resulting finite-time candidate is then compared with every smooth
approximating flow on an exhaustion of the divisor complement.  This last
step is isolated in \cref{lem:finite-comparison}; in particular, no uniform
convergence at $t=0$ is asserted on the noncompact regular locus.

The next theorem specializes the surface formula to several fibres of the
same Hirzebruch surface.

\begin{theorem}\label{thm:Hirz-exact}
Fix integers $e\ge2$ and $k\ge1$, and let $F_1,\ldots,F_k$ be distinct
fibres of $\mathbb F_e$.
Let $a,b_1,\ldots,b_k>0$, put $B=\sum_{i=1}^k b_i$, and assume $B>ea$.  Fix a
K\"ahler form $\omega$ and a smooth positive volume form $dV_X$, with
\begin{equation}\label{eq:Hirz-class}
 \{\omega\}=a\{S\}+B\mathfrak f,
\end{equation}
and choose $\varphi_0\in\PSH(\mathbb F_e,\omega)$ such that
\begin{equation}\label{eq:Hirz-initial}
 \omega+\ddc\varphi_0=a[S]+\sum_{i=1}^kb_i[F_i].
\end{equation}
Set
\begin{equation}\label{eq:Hirz-time-slope}
 \tau=\min\{a,b_1,\ldots,b_k\},
 \qquad \kappa_{e,k}=\min\left\{\frac{k}{e},1\right\}.
\end{equation}
Then for $0<t<\tau$, the maximal weak solution $(\varphi_t)$ satisfies
\begin{equation}\label{eq:Hirz-exact-formula}
 \arn(\varphi_t,x)=
 \begin{cases}
 a-\kappa_{e,k}t,&x\in S\setminus\bigcup_iF_i,\\[1mm]
 b_i-t,&x\in F_i\setminus S,\\[1mm]
 \max\{a-\kappa_{e,k}t,b_i-t\},&x=S\cap F_i,\\[1mm]
 0,&x\notin S\cup\bigcup_iF_i.
 \end{cases}
\end{equation}
\end{theorem}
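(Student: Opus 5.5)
The plan is to obtain \cref{thm:Hirz-exact} as a direct specialization of \cref{thm:surface-exactness}. I take $X=\mathbb F_e$, $D=S+F_1+\cdots+F_k$, $r_0=k+1$, with multiplicities $m_S=a$ and $m_{F_i}=b_i$.

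First I check the setup of \cref{thm:surface-exactness}.
\begin{itemize}
\item The divisor $D$ is reduced and SNC. The fibres are disjoint, and $S$ is a section, so it meets each $F_i$ transversally in exactly one point.
\item Since $\sum_i\{F_i\}=k\mathfrak f$, the class condition in \eqref{eq:pure-initial} is exactly \eqref{eq:Hirz-class}.
\item This class is Kähler. A class $c\{S\}+d\mathfrak f$ on $\mathbb F_e$ is ample iff $c>0$ and $d>ec$, and $B>ea$ is given.
\item The current equation in \eqref{eq:pure-initial} is \eqref{eq:Hirz-initial}.
\item $\tau_0=\min\{a,b_1,\ldots,b_k\}=\tau$.
\end{itemize}

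Next I compute the divisorial Zariski decomposition of $\beta=\{S\}+k\mathfrak f$. Using $\{S\}^2=-e$, $\{S\}\cdot\mathfrak f=1$ and $\mathfrak f^2=0$, we get $\beta\cdot\mathfrak f=1$ and $\beta\cdot\{S\}=k-e$.
\begin{itemize}
\item If $k\ge e$, then $\beta$ is nef, since the cone of curves is spanned by $S$ and $\mathfrak f$. Hence $N=0$ and $P=\beta$.
\item If $k<e$, the only negative curve is $S$, so $N=\eta S$. Solving $(\beta-\eta\{S\})\cdot\{S\}=0$ gives $\eta=1-k/e$, and then $P=\frac{k}{e}(\{S\}+e\mathfrak f)$.
\end{itemize}
In both cases $\eta_S=1-\kappa_{e,k}$, with the convention $\eta_S=0$ when $N=0$.

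Then I verify (Z1)–(Z3).
\begin{itemize}
\item (Z1). The class $\{S\}+e\mathfrak f$ is the class of a base-point-free line bundle; it is the pullback of $\mathcal O(1)$ under the contraction of $S$ to the weighted cone. Pulling back a Fubini–Study form therefore gives a smooth semipositive representative, and this handles $P$ in the case $k<e$. For $k\ge e$, $P=\{S\}+k\mathfrak f$ is semipositive as well: it is $\{S\}+e\mathfrak f$ plus $(k-e)$ copies of the pullback of a form on $\mathbb P^1$. Positivity of the self-intersection follows from $P^2=k^2/e>0$, respectively $P^2=2k-e>0$.
\item (Z2). This holds because $N_{\rm red}\subset\{S\}$, so $D+N_{\rm red}=D$ as a support.
\item (Z3). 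I take $E_{\rm aux}=\varepsilon S$ with small $\varepsilon>0$. Then $P-\varepsilon\{S\}=c\{S\}+k\mathfrak f$ with $0<c<k/e$ (respectively $0<c<1\le k/e$), so $k>ec$ and the class is Kähler. The support of $E_{\rm aux}$ is $S$, which lies in $\Supp D$, and the SNC condition is unchanged.
\end{itemize}

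Finally I read off \eqref{eq:surface-exact-profile} with $\mathcal G=\{S,F_1,\ldots,F_k\}$. For $S$ we have $m_S=a$, $d_S=1$ and $\eta_S=1-\kappa_{e,k}$, which gives the value $a-\kappa_{e,k}t$. For each fibre, $m_{F_i}=b_i$, $d_{F_i}=1$ and $\eta_{F_i}=0$, which gives $b_i-t$. For $t<\tau$ all these values are positive, so the $0$ in the maximum is irrelevant on $S\cup\bigcup_iF_i$. At a point off the support of $D$, only the $0$ survives. This produces exactly the four cases of \eqref{eq:Hirz-exact-formula}. The only step needing real care is (Z1), where a genuinely smooth semipositive representative of $P$ must be exhibited rather than merely a nef class. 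The contraction-of-$S$ description of the line bundle with class $\{S\}+e\mathfrak f$ handles this.
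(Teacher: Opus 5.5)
Your proposal is correct and matches the paper's proof: both specialize \cref{thm:surface-exactness} to $D=S+\sum_iF_i$, compute $N(\beta_k)=(1-k/e)_+S$, verify \textup{(Z1)--(Z3)}, and read off $\eta_S=1-\kappa_{e,k}$, giving the coefficient $a-\kappa_{e,k}t$ on $S$ and $b_i-t$ on each fibre. The one cosmetic difference is that you take $E_{\rm aux}=\varepsilon S$ for every $k$, whereas the paper takes $E_{\rm aux}=0$ when $k>e$; your choice is also valid, since $(1-\varepsilon)\{S\}+k\mathfrak f$ is K\"ahler for $k\ge e$.
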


The residual class is nef and big when $k\ge e$.  When the initial
singularity is DGL-compatible, this is precisely the case covered by
\cite[Proposition~3.6]{DGL26geometry}.  The range $1\le k<e$ is non-nef
and lies outside that proposition.  Theorem~\ref{thm:Hirz-exact} treats
arbitrary positive real weights in both regimes.

\begin{corollary}
\label{cor:exact-counterexample}
For every $e\ge2$, take $k=1$, $a=1$, and $b_1=e+1$ in
\cref{thm:Hirz-exact}.  Then, for every $x\in S\setminus F_1$ and
$0<t<1$,
\begin{equation}\label{eq:Q110-counterexample}
 \arn(\varphi_t,x)=1-\frac te>1-t
 =\max\{\arn(\varphi_0,x)-t,0\}.
\end{equation}
Moreover, the initial potential $\varphi_0$ may be chosen to have analytic
singularities.
\end{corollary}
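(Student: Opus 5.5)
The corollary is a direct specialization of \cref{thm:Hirz-exact}, plus one explicit construction of an initial potential. We take $k=1$, $a=1$, $b_1=e+1$. Then $B=e+1>e=ea$, so the positivity hypothesis of \cref{thm:Hirz-exact} holds. Also $\tau=\min\{1,e+1\}=1$, and since $k=1<e$ we get $\kappa_{e,1}=1/e$.

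The class $\{S\}+(e+1)\mathfrak f$ is K\"ahler, because it meets $S$ positively, $(\{S\}+(e+1)\mathfrak f)\cdot\{S\}=1>0$, and meets $\mathfrak f$ positively. Hence a K\"ahler form $\omega$ in this class exists. Since $[S]+(e+1)[F_1]$ is a closed positive current cohomologous to $\omega$, the $\ddc$-lemma provides a quasi-psh $\varphi_0$ with $\omega+\ddc\varphi_0=[S]+(e+1)[F_1]$. For $x\in S\setminus F_1$ and $0<t<1$, the first case of \eqref{eq:Hirz-exact-formula} gives $\arn(\varphi_t,x)=1-t/e$.

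The right-hand side requires computing $\arn(\varphi_0,x)$. Near such $x$, the current $\omega+\ddc\varphi_0$ equals $[S]$, and $S$ is smooth there. So locally $\varphi_0=\log|z_1|^2+(\text{smooth})$, with the normalization of $\ddc$ fixed in \cref{sec:conventions}. The function $\exp(-q\varphi_0)\sim|z_1|^{-2q}$ is locally integrable iff $q<1$. Hence $c(\varphi_0,x)=1$ and $\arn(\varphi_0,x)=1$, which also matches the case $t\to0$ of \eqref{eq:Hirz-exact-formula}. Therefore $\max\{\arn(\varphi_0,x)-t,0\}=1-t$ for $t<1$. Finally, $1-t/e>1-t$ because $e\ge2$ and $t>0$.

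For the analytic-singularities claim, we choose a smooth hermitian metric $h$ on $\mathcal O(S+(e+1)F_1)$ whose curvature is $\omega$, which is possible since the class matches. With $s$ the canonical section, we set $\varphi_0=\log|s|_h^2$. This potential has analytic singularities and satisfies \eqref{eq:Hirz-initial} by Poincar\'e--Lelong.

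The only point needing care is that the normalization of $\ddc$ in \cref{sec:conventions} matches Poincar\'e--Lelong, so that the coefficient of $[S]$ is exactly $1$ and not $2\pi$ or $1/2$. We resolve this by reading off that normalization. No other obstacle arises, since all the analytic content is contained in \cref{thm:Hirz-exact}.
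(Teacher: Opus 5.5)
Your proposal is correct and follows the paper's proof: specialize \cref{thm:Hirz-exact} with $B=e+1>e$, $\tau=1$, and $\kappa_{e,1}=1/e$; read off $\arn(\varphi_0,x)=1$ from the local form $\log|z_1|^2+\text{smooth}$; and use $e\ge2$ for the strict inequality. The only difference is cosmetic. You construct $\varphi_0=\log|s|_h^2$ from a metric with curvature $\omega$, whereas the paper shows by a local pluriharmonicity argument that every solution of the initial equation has the form $\log|f_Sf_1^{\,e+1}|^2+h$. Since $\varphi_0$ is determined up to an additive constant, the two potentials coincide.
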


The current lower bound alone gives
$\arn(\varphi_t,x)\ge1-t/e>1-t$, so the strict inequality does not depend
on the reverse comparison.  The negative answer to
\cref{q:DGL-decay-formula} is therefore logically independent of the exact
Arnold-multiplicity formula.

The one-fibre example is also compatible term by term with the
higher-dimensional conclusions of \cite[Theorem~1.9]{DGL26geometry}.  For
$x\in S\setminus F_1$, one has
$\arn(\varphi_0,x)=1$, whereas the global initial Arnold multiplicity is
$\arn(\varphi_0)=e+1$.  Hence, for $0<t<1$,
\begin{equation}\label{eq:compatibility-DGL-bounds}
 1-t < 1-\frac{t}{e} < 1-\frac{t}{e+1}.
\end{equation}
The middle term is $\arn(\varphi_t,x)$ by
\eqref{eq:Q110-counterexample}, while the outer terms are precisely the lower
and upper bounds in \eqref{eq:DGL-two-sided}.  Moreover, on the same interval,
\eqref{eq:Hirz-exact-formula} gives
\[
 \arn(\varphi_t)=e+1-t=\arn(\varphi_0)-t,
 \qquad 0<t<1,
\]
so the global identity in \cite[Theorem~1.9]{DGL26geometry} also remains
valid.  Finally, \cite[Proposition~3.6]{DGL26geometry} does not apply because
the residual class $\{S\}+\mathfrak f$ is not nef, as shown by
$(\{S\}+\mathfrak f)\cdot\{S\}=1-e<0$.  This is the non-nef situation in
which \cite[Remark~4.8]{DGL26geometry} anticipates an additional divisorial
contribution.  Thus the counterexample realizes the mechanism anticipated
there; it does not contradict these proved higher-dimensional results.

The examples also show that the short-time profile is not determined by the
initial current germ and its local singularity type.

\begin{theorem}
\label{thm:parametric-nonlocality}
For every integer $M\ge2$ and every positive integer $a$, there exist an
integer $e\ge M+2$, a K\"ahler form $\omega$ and a smooth positive volume
form $dV_X$ on $X=\mathbb F_e$, a point $x\in S$, and potentials
$\varphi_0^{(1)},\ldots,\varphi_0^{(M)}\in\PSH(X,\omega)$ with analytic
singularities such that
\begin{equation}\label{eq:intro-common-germ}
 \omega+\ddc\varphi_0^{(j)}=a[S]\quad\text{near }x
 \qquad(1\le j\le M),
\end{equation}
and
\begin{equation}\label{eq:intro-common-global-arnold}
 \arn(\varphi_0^{(1)})=\cdots=\arn(\varphi_0^{(M)}).
\end{equation}
Let $(\varphi_t^{(j)})$ be the maximal weak solution with initial potential
$\varphi_0^{(j)}$.  Then
\begin{equation}\label{eq:intro-nonlocal-slopes}
 \arn(\varphi_t^{(j)},x)=a-\frac{j+1}{e}t,
 \qquad 1\le j\le M,\quad 0<t<a.
\end{equation}
\end{theorem}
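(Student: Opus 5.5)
We will deduce \cref{thm:parametric-nonlocality} from \cref{thm:Hirz-exact}, using the number of fibres to vary the slope $\kappa_{e,k}=k/e$ while keeping the germ at $x$ and the global Arnold multiplicity fixed. Fix $M\ge2$ and $a\in\mathbb Z_{>0}$, take $e:=M+2$ (so $e\ge M+2$), and fix a point $x\in S$. Choose $M+1$ distinct fibres $F_1,\ldots,F_{M+1}$ of $\pi_e$, none passing through $x$. For $1\le j\le M$ set $k_j:=j+1$, so that $2\le k_j\le M+1<e$ and therefore $\kappa_{e,k_j}=(j+1)/e$. This already accounts for the slope $(j+1)/e$ in \eqref{eq:intro-nonlocal-slopes}.

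Next we fix a common background class. We want a single $B>ea$ and weights $b^{(j)}_1,\ldots,b^{(j)}_{j+1}>0$ with $\sum_i b^{(j)}_i=B$ and $\min_i b^{(j)}_i\ge a$, so that $\tau=\min\{a,b_i\}=a$ for every $j$. We also want $\max_i b^{(j)}_i$ to be independent of $j$. We take $B$ a large integer, for instance $B=(e+1)a+ L$ with $L\ge0$ chosen below. We then set $b^{(j)}_1=\beta$ for a fixed integer $\beta$ and split the remaining mass $B-\beta$ into $j$ integer pieces, each in $[a,\beta]$. This is possible if $ja\le B-\beta\le j\beta$ for all $1\le j\le M$. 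Choosing $\beta=B/2$ and $B\ge 2Ma$ works: the lower bound $ja\le B/2$ follows from $B\ge 2Ma$, and the upper bound $B/2\le j\beta$ holds since $j\ge1$. We then fix one K\"ahler form $\omega$ with $\{\omega\}=a\{S\}+B\mathfrak f$, which is possible because $B>ea$ makes this class ample, together with one volume form $dV_X$. For each $j$ we take $\varphi_0^{(j)}$ with $\omega+\ddc\varphi_0^{(j)}=a[S]+\sum_i b^{(j)}_i[F_i]$. The same cohomology class gives solvability. Since the weights are integers, we may take $\varphi_0^{(j)}=\log|s_j|_h^2$ up to a smooth term, for $s_j$ a section of the corresponding line bundle, so $\varphi_0^{(j)}$ has analytic singularities.

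We now verify the three conclusions. First, \eqref{eq:intro-common-germ} holds because no $F_i$ passes through $x$, so near $x$ the current is $a[S]$. Second, for the global Arnold multiplicity, the SNC divisor $a S+\sum b_iF_i$ has $\arn(\varphi_0^{(j)},y)$ equal to the maximal coefficient through $y$; at normal crossings the Arnold multiplicity is the maximum of the two coefficients. Hence $\arn(\varphi_0^{(j)})=\max\{a,\beta\}=\beta$, independent of $j$, which is \eqref{eq:intro-common-global-arnold}. Third, \cref{thm:Hirz-exact} applied with $k=j+1$ and $\tau=a$ gives $\arn(\varphi_t^{(j)},x)=a-\frac{j+1}{e}t$ for $0<t<a$, since $x\in S\setminus\bigcup_iF_i$.

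We expect the only delicate step to be arranging equal global Arnold multiplicities while keeping every $b_i\ge a$, so that the time interval is all of $(0,a)$; the bookkeeping above handles this. The rest is a direct invocation of \cref{thm:Hirz-exact}. If integrality of the splitting is troublesome, we can enlarge $B$ further, since the only constraints are $B>ea$ and $B\ge 2Ma$ with $B$ even.
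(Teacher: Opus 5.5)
Your proposal is correct and follows essentially the paper's route: fix one K\"ahler class $a\{S\}+B\mathfrak f$, let the $j$-th initial current use $j+1<e$ fibres so that \cref{thm:Hirz-exact} gives slope $(j+1)/e$ on all of $(0,a)$, and keep the global Arnold multiplicity fixed by giving one fibre a $j$-independent maximal weight. The only difference is bookkeeping. The paper puts weight $C$ on an extra fibre $F_0$ and weight $C/j$ on each of $F_1,\ldots,F_j$, with $C$ divisible by $\operatorname{lcm}(1,\ldots,M)$, $C>Ma$ and $2C>ea$; your unequal integer split of the remaining mass $B/2$ (for instance $a,\ldots,a,B/2-(j-1)a$) avoids the divisibility condition.
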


For the Hirzebruch families, the exact formulas determine the multiplier
ideals, their jumping times, and the persistence loci of positive Arnold
multiplicity.  The product theorem in \cref{thm:product} transports these
counterexamples to every complex dimension $n\ge2$ and realizes finite
maxima of the affine profiles constructed here.

The literature inputs are used within their stated ranges.  Boucksom's
theory supplies divisorial domination and the orthogonal surface
decomposition \cite{Bou04}, while non-pluripolar products and full-mass
classes come from \cite{BEGZ10}.  Di Nezza--Floris--Trapani identify the
generic divisorial coefficients of full-mass currents
\cite[Theorem~3.1]{DFT17}; the pointwise Lelong-number theorem used after
removing the fixed part is due to Darvas--Di Nezza--Lu
\cite[Theorem~1.1]{DDL18}.  In the ample SNC purely divisorial setting,
Proposition~2.5 of \cite{DGL26cusps} gives an explicit formula under its
additional compatible smooth-form identity and the normalization
$dV_X=\omega^n$, while
\cite[Theorem~3.6]{DGL26cusps} proves Poincar\'e regularity.  Neither the
negative section nor a fibre of $\mathbb F_e$ is ample, so those results do
not provide the regularity used here.  We use the maximal-envelope and
comparison results \cite[Theorems~1.11 and~1.13]{DGL26cusps} only for the
weak-flow framework; they do not replace the Berman--Guenancia elliptic
regularity input.  The
chamber formula in \cref{prop:chamber} is classical surface linear algebra;
local finiteness is invoked only on compact subsets of the algebraic big
cone in the real N\'eron--Severi space $N^1(X)_{\mathbb R}$ of a smooth
projective surface
\cite[Theorem~1.1 and Proposition~1.13]{BKS04}.

The paper treats the first interval on which every initial divisor
coefficient is positive.  Nonhomothetic families of residual classes are
discussed in \cref{rem:nonhomothetic-obstruction}: good fixed-time Zariski
decompositions alone do not control the additional time derivative of an
elliptic residual potential.

The remainder of the paper is organized as follows.
Section~\ref{sec:conventions} fixes the conventions and reviews the
analytic and cohomological preliminaries.
Section~\ref{sec:dynamic} proves the divisorial lower bounds and the
chamber formula. Section~\ref{sec:surface-exactness} establishes the
exact formula under adapted Zariski hypotheses.
Section~\ref{sec:Hirzebruch} develops the geometry of Hirzebruch surfaces
and proves the counterexamples and exact profiles.
Section~\ref{sec:applications} derives the nonlocality, multiplier-ideal,
persistence, and sharpness consequences. Section~\ref{sec:products}
proves product stability and its higher-dimensional consequences.

\section{Preliminaries}
\label{sec:conventions}

\subsection{Notation and Conventions}

We use $i=\sqrt{-1}$ and
\begin{equation}\label{eq:ddc-normalization}
 d^c=\frac{i}{4\pi}(\bar\partial-\partial),
 \qquad \ddc=\frac{i}{2\pi}\partial\bar\partial.
\end{equation}
For a closed real $(1,1)$-form or current $T$ on $X$, the notation
$\{T\}$ means its real $(1,1)$-cohomology class in
$H^{1,1}(X,\mathbb R)$.
Let $D$ be an effective divisor on $X$, let $s_D$ be the canonical defining
section of $\mathcal O_X(D)$, and choose a smooth Hermitian metric $h_D$ on
this line bundle.  Write $\ell_D=\log|s_D|_{h_D}^2$, and let $\theta_D$ be
the smooth real Chern curvature $(1,1)$-form representing $\{D\}$.  The
Poincar\'e--Lelong identity is
\begin{equation}\label{eq:PL}
 \ddc\ell_D=[D]-\theta_D.
\end{equation}
Here $D$ is a divisor, $[D]$ is its integration current, $\{D\}$ is its
real $(1,1)$-cohomology class, and $\theta_D$ is a smooth representative
of that class.  These four objects will not be identified.  In
set-theoretic expressions such as $x\in D$, $X\setminus D$, and
$D_1\cap D_2$, a divisor denotes its support.
If $E=\sum_j a_jE_j$ is an effective real divisor with distinct prime
components $E_j$, its \emph{associated reduced divisor} is
$E_{\rm red}:=\sum_jE_j$.  A reduced divisor has \emph{simple normal
crossings} (SNC) if its irreducible components are smooth and locally form
a subset of a holomorphic coordinate system.

For a smooth real closed $(1,1)$-form $\theta$ on a compact complex
manifold $X$, $\PSH(X,\theta)$ consists of the upper semicontinuous,
locally integrable functions $u\not\equiv-\infty$ such that
$\theta+\ddc u\ge0$ in the sense of currents.  Such a function is called
\emph{$\theta$-plurisubharmonic}, or \emph{$\theta$-psh}.  A function is
\emph{quasi-plurisubharmonic}, or \emph{quasi-psh}, if it can be written
locally as the sum of a plurisubharmonic function and a smooth function.
Every $\theta$-psh function is quasi-psh.  The converse need not hold for
a prescribed background form $\theta$; on a compact K\"ahler manifold
with a fixed K\"ahler form $\omega_X$, a quasi-psh function belongs to
$\PSH(X,C\omega_X)$ for some constant $C>0$.

A quasi-psh function has \emph{neat analytic singularities} if, on every sufficiently
small coordinate neighbourhood, there are an integer $N\ge1$, holomorphic
functions $f_1,\ldots,f_N$, a number $c_0>0$, and a smooth function $h$
such that
\begin{equation}\label{eq:analytic-singularities}
 u=c_0\log\!\left(\sum_{\ell=1}^N|f_\ell|^2\right)+h.
\end{equation}
We abbreviate neat analytic singularities to analytic singularities.
Two quasi-plurisubharmonic functions have the same \emph{singularity
type} near a point if their difference is locally bounded there.  Two
germs of closed positive $(1,1)$-currents have the same local singularity
type if their local potentials do; this condition is independent of the
chosen smooth local representatives.
We say that such a singularity is \emph{DGL-compatible} if one may take
$c_0=1/m_0$ for a positive integer $m_0$.  Equivalently, after dividing
the potential by two, it has the normalization in
\cite[Definition~1.1]{DGL26geometry}.
Given an integer $r_D\ge1$, local defining functions
$s_1,\ldots,s_{r_D}$ for prime divisors, and real weights
$m_1,\ldots,m_{r_D}>0$, we call an expression
\[
 \sum_{i=1}^{r_D}m_i\log|s_i|^2+O(1),\qquad m_i>0,
\]
with arbitrary positive real weights a \emph{real divisorial logarithmic
singularity}, where $O(1)$ denotes a locally bounded term.  If this term is
smooth and $m_i=c_*N_i$ for some $c_*>0$ and integers $N_i\ge1$, the
singularity is analytic in the sense of \eqref{eq:analytic-singularities}.

\subsection{Log resolutions and normalization conversion}
\label{subsec:resolutions}

For the resolution discussion, fix a K\"ahler form $\omega$ on $X$ and a
potential $\varphi_0\in\PSH(X,\omega)$ with analytic singularities as in
\eqref{eq:analytic-singularities}.  Let
$\mathfrak a\subset\mathcal O_X$ be the coherent ideal sheaf associated
with those analytic singularities.  An
\emph{admissible principalizing log resolution} is a modification
$\pi:Y\to X$ between smooth compact complex manifolds such that $\pi$ is an
isomorphism over $X\setminus V(\mathfrak a)$, where $V(\mathfrak a)$ is
the zero set of $\mathfrak a$,
$\mathfrak a\mathcal O_Y$ is principal, and the union of its divisor with
the relative Jacobian divisor has SNC support.  Let
$D_1,\ldots,D_{q_D}$ be the distinct prime components of the divisor of
$\mathfrak a\mathcal O_Y$; thus $q_D\ge1$ in the nonsmooth case considered
below.  We require, as part of admissibility, that every prime component of
the relative Jacobian divisor occur among the $D_j$.  On such a resolution,
\begin{equation}\label{eq:resolution-data}
 \pi^*(\omega+\ddc\varphi_0)
 =\sum_{j=1}^{q_D}m_j[D_j]+R_{\pi,0},
 \qquad m_j>0,
\end{equation}
where $R_{\pi,0}$ is a smooth closed semipositive real $(1,1)$-form on
$Y$.  To justify both properties, work on a coordinate chart on which
$\mathfrak a\mathcal O_Y=\mathcal O_Y(-E)$, with
$E=\sum_ja_jD_j$, and let $\sigma_E$ be a local defining function for
$E$.  Principalization gives
$\pi^*f_\ell=\sigma_Eu_\ell$, where the holomorphic functions $u_\ell$ have no
common zero.  Hence
\[
 \varphi_0\circ\pi-c_0\log|\sigma_E|^2
 =c_0\log\!\left(\sum_\ell|u_\ell|^2\right)+h\circ\pi
\]
extends smoothly across $E$, and $m_j=c_0a_j$ in the normalization
\eqref{eq:ddc-normalization}.  This proves that the residual current in
\eqref{eq:resolution-data} is represented by a smooth form.  The current
on the left of \eqref{eq:resolution-data} is positive, and the Siu
decomposition remains positive after its generic divisorial components
are removed.  Consequently the smooth form $R_{\pi,0}$ is semipositive.
Let $s_j$ be the canonical defining section of $\mathcal O_Y(D_j)$ and
choose a smooth Hermitian metric $h_j$ on that line bundle.  Let
$\Theta_j$ be its smooth Chern curvature form, so
$\{\Theta_j\}=\{D_j\}$.  By multiplying one of the metrics by a smooth
positive function, we may arrange the identities
\begin{equation}\label{eq:resolution-metric-normalization}
 \pi^*\omega=R_{\pi,0}+\sum_{j=1}^{q_D}m_j\Theta_j,
 \qquad
 \varphi_0\circ\pi=\sum_{j=1}^{q_D}m_j\ell_j+C_{\pi,0},
 \qquad \ell_j:=\log|s_j|_{h_j}^2,
\end{equation}
where $C_{\pi,0}$ is a real constant.  To obtain
\eqref{eq:resolution-metric-normalization}, begin with arbitrary metrics
$h_j$ and define on
$Y\setminus\bigcup_jD_j$
\[
 g:=\varphi_0\circ\pi-\sum_{j=1}^{q_D}m_j\ell_j
\]
and use the calculation above to extend $g$ smoothly to $Y$.  Equation
\eqref{eq:resolution-data} and the Poincar\'e--Lelong identity give
\[
 \pi^*\omega=R_{\pi,0}+\sum_{j=1}^{q_D}m_j\Theta_j-\ddc g.
\]
Choose $C_{\pi,0}\in\mathbb R$ and define a new Hermitian metric $h'_1$
by the squared-norm identity
\[
 |\xi|_{h'_1}^2
 =\exp\!\left(\frac{g-C_{\pi,0}}{m_1}\right)|\xi|_{h_1}^2
 \qquad (\xi\in\mathcal O_Y(D_1)).
\]
The associated logarithmic weight and curvature form are
\[
 \ell'_1=\ell_1+\frac{g-C_{\pi,0}}{m_1},
 \qquad
 \Theta'_1=\Theta_1-\frac{1}{m_1}\ddc g.
\]
After dropping the primes, these equalities give both identities in
\eqref{eq:resolution-metric-normalization}.  If $K_X$ and
$K_Y$ denote canonical divisors and
$b_j=\operatorname{ord}_{D_j}(K_Y-\pi^*K_X)\in\mathbb Z_{\ge0}$, then a
smooth positive volume form $dV_Y$ on $Y$ can be chosen so that
\begin{equation}\label{eq:jacobian}
 \pi^*dV_X=\left(\prod_{j=1}^{q_D}|s_j|_{h_j}^{2b_j}\right)dV_Y.
\end{equation}
Thus $b_j$ is the discrepancy coefficient and the log discrepancy of the
divisorial valuation $\operatorname{ord}_{D_j}$ is
\begin{equation}\label{eq:log-discrepancy}
 A_X(D_j)=1+b_j.
\end{equation}
The centre of $D_j$ on $X$ is $c_X(D_j)=\pi(D_j)$.

The Di Nezza--Guedj--Lu preprints use
\[
 d^c_{\rm DGL}=\frac{1}{2i\pi}(\partial-\bar\partial),
 \qquad
 \ddc_{\rm DGL}=\frac{i}{\pi}\partial\bar\partial=2\ddc.
\]
We keep the canonical integration current $[D]$ fixed and translate their
flow by scaling the potential and time.  Let $(\psi_s)$ be the maximal flow
in the DGL convention with the same background form $\omega$, the same
volume form $dV_X$, and initial potential $\psi_0=\varphi_0/2$.  Set
\begin{equation}\label{eq:DGL-normalization-conversion}
 s=\frac t2,\qquad \varphi_t=2\psi_{t/2}.
\end{equation}
Then
\[
 \omega+\ddc\varphi_t
 =\omega+\ddc_{\rm DGL}\psi_{t/2},
 \qquad
 \dot\varphi_t=(\partial_s\psi_s)|_{s=t/2},
\]
so the two scalar flow equations agree, including their smooth
approximations and maximal decreasing limits.  Moreover,
$\arn(2u,x)=2\arn(u,x)$ for every quasi-psh function $u$ and every point
$x$ in its domain.

For the resolution data of a DGL-compatible initial potential, the
logarithmic coefficients of $\psi_0$ are $\widetilde m_j=m_j/2$, while
the DGL curvature forms are
$\widetilde\Theta_j=2\Theta_j$ and the smooth residual form
$R_{\pi,0}$ is unchanged.  Writing $A_j=1+b_j$, the potential statement of
\cite[Lemma~3.2]{DGL26geometry}, applied at $s=t/2$, gives, for
$0<t<\min_jm_j/A_j$,
\begin{equation}\label{eq:DGL-potential-conversion}
 \begin{aligned}
 \psi_s\circ\pi
  &=\sum_j(\widetilde m_j-A_js)\ell_j+v_s,\\
 \varphi_t\circ\pi
  &=\sum_j(m_j-A_jt)\ell_j+2v_{t/2},
 \end{aligned}
\end{equation}
where $\ell_j=\log|s_j|_{h_j}^2$ and
\[
 \begin{aligned}
  2v_s&\in\PSH\!\left(Y,R_{\pi,0}
       +s\sum_jA_j\widetilde\Theta_j\right),\\
  2v_{t/2}&\in\PSH\!\left(Y,R_{\pi,0}
       +t\sum_jA_j\Theta_j\right).
 \end{aligned}
\]
Indeed, if
$B_s:=R_{\pi,0}+s\sum_jA_j\widetilde\Theta_j$, the DGL statement is
$B_s+\ddc_{\rm DGL}v_s=B_s+2\ddc v_s\ge0$, which is precisely the first
membership above in our convention.  The second follows by setting
$s=t/2$ and using $\widetilde\Theta_j=2\Theta_j$.
Thus the numerical coefficient $m_j-A_jt$ and the time threshold in our
normalization follow from DGL without rescaling $[D]$.  The discrepancies
$b_j$ are intrinsic.

\subsection{Lelong numbers and Arnold multiplicities}
\label{subsec:arnold-invariants}

Let $x\in X$ and let $u$ be quasi-plurisubharmonic near $x$.
Choose holomorphic coordinates $z$ centred at $x$ and a smooth local
function $g$ such that $u+g$ is plurisubharmonic.  We define the point
Lelong number by
\begin{equation}\label{eq:point-lelong-definition}
 \nu_x(u):=\lim_{r\downarrow0}
 \frac{\sup_{\|z\|\le r}(u(z)+g(z))}{\log r^2}.
\end{equation}
The limit is independent of $g$ and of the chosen coordinates.  For a
closed positive $(1,1)$-current $T$, $\nu_x(T)$ denotes the Lelong number
of any local plurisubharmonic potential of $T$.  This convention gives
$\nu_0(\log|z|^2)=1$; see
\cite[Chapter~III, \S\S5--7]{Dem-book}.  The local complex singularity
exponent and Arnold multiplicity are
\begin{equation}\label{eq:arnold-def}
 \lct(u,x)=\sup\{q>0:\exp(-qu)\in L^1_{\mathrm{loc}}\text{ near }x\},
 \qquad \arn(u,x)=\lct(u,x)^{-1}.
\end{equation}
We use the reciprocal conventions $1/(+\infty):=0$ and
$1/0:=+\infty$.  The global invariant is $\arn(u)=\sup_X\arn(u,x)$.
Skoda's exponential-integrability theorem
\cite[Theorem~2.50]{GZ17b}, with \eqref{eq:ddc-normalization}, implies
that $\nu_x(u)=0$ gives $\exp(-A u)\in L^1_{\mathrm{loc}}$ for every finite
$A>0$.

The next calculation distinguishes point Lelong numbers from Arnold
multiplicities at SNC crossings.

\begin{lemma}\label{lem:SNC}
Let $D_j=\{z_j=0\}$, $1\le j\le r_D\le n$, near $0\in\CC^n$.  Suppose
$c_j\ge0$, $u$ is quasi-psh with $\nu_0(u)=0$, and $v$ is locally bounded.
Then
\begin{equation}\label{eq:SNC-arnold}
 \arn\!\left(\sum_{j=1}^{r_D}c_j\log|z_j|^2+u+v,0\right)
 =\max_j c_j.
\end{equation}
If $T$ is a closed positive $(1,1)$-current and $T\ge c[D]$ for $c\ge0$,
then every local potential of $T$ has Arnold multiplicity at least $c$ at
each smooth point of $D$.
\end{lemma}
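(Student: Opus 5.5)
We split the statement into an upper bound and a lower bound for the first claim, and then treat the current statement separately. Write $w=\sum_j c_j\log|z_j|^2+u+v$ and $c_{\max}=\max_jc_j$. If $c_{\max}=0$ then $w=u+v$ has zero Lelong number at $0$. Indeed, $v$ is bounded, so $\nu_0(u+v)=\nu_0(u)=0$, the sup in \eqref{eq:point-lelong-definition} being shifted by a bounded amount. Skoda's theorem then gives $\exp(-Aw)\in L^1_{\rm loc}$ for all $A$, hence $\arn(w,0)=0$. From now on assume $c_{\max}>0$.

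\emph{Upper bound $\arn(w,0)\le c_{\max}$.} We show that $\exp(-qw)\in L^1$ near $0$ for every $q<1/c_{\max}$. Since $v$ is bounded, it can be dropped. Choose $p>1$ with $pqc_{\max}<1$, and let $p'$ be its conjugate exponent. By H\"older,
\[
\int e^{-qw}\le \Bigl(\int\prod_j|z_j|^{-2pqc_j}\Bigr)^{1/p}\Bigl(\int e^{-p'qu}\Bigr)^{1/p'}.
\]
The first factor is a product of one-variable integrals $\int_{|z_j|<r}|z_j|^{-2\alpha}$ with $\alpha<1$, so it is finite by Fubini. The second factor is finite by Skoda, since $\nu_0(u)=0$. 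Hence $\lct(w,0)\ge 1/c_{\max}$.

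\emph{Lower bound $\arn(w,0)\ge c_{\max}$.} Suppose $c_{j_0}=c_{\max}$ and take $q\ge 1/c_{\max}$. Here $u$ is bounded above near $0$, being quasi-psh and hence upper semicontinuous, and $v$ is bounded, so $e^{-q(u+v)}\ge C>0$. Every other term satisfies $\log|z_j|^2\le 0$ on the unit polydisc, so $e^{-qc_j\log|z_j|^2}\ge1$. Therefore $e^{-qw}\ge C|z_{j_0}|^{-2qc_{\max}}$, which is not integrable near $0$ because $qc_{\max}\ge1$. This gives $\lct(w,0)\le1/c_{\max}$.

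\emph{The current statement.} Let $x$ be a smooth point of $D$ and choose coordinates with $D=\{z_1=0\}$ near $x$. By Siu's decomposition, or directly because $T-c[D]\ge0$ is closed, we can write the local potential as $\phi=c\log|z_1|^2+\psi$, where $\psi$ is psh. Indeed, $dd^c$ of $\phi-c\log|z_1|^2$ is $T-c[D]\ge0$, so that difference is psh after a pluriharmonic correction, which is smooth and therefore harmless. Since $\psi$ is bounded above near $x$, the argument of the lower-bound step gives $e^{-q\phi}\ge C|z_1|^{-2qc}$, which is non-integrable for $q\ge 1/c$. Hence $\arn(\phi,x)\ge c$, and the case $c=0$ is trivial.

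The only step that is not routine is the upper bound, where Skoda's theorem has to absorb the zero-Lelong-number term $u$ through H\"older; everything else is elementary.
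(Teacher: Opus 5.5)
Your proof is correct and follows the paper's argument essentially step for step: H\"older plus Skoda for integrability when $q<1/\max_j c_j$, a pointwise bound $e^{-qw}\ge C|z_{j_0}|^{-2q\max_j c_j}$ plus Tonelli for non-integrability (you use $|z_j|^{-2qc_j}\ge 1$ on the unit polydisc where the paper restricts the other coordinates to annuli), and the same splitting $c\log|z_1|^2+\psi$ with $\psi$ psh for the current statement. The only small slip is in the case $\max_j c_j=0$: there Skoda's theorem should be applied to $u$ itself, since $u+v$ need not be quasi-psh, and the bounded $v$ then absorbed via $e^{-A(u+v)}\le e^{A\sup|v|}e^{-Au}$, exactly as you already do in the upper-bound step.
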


\begin{proof}
A locally bounded summand does not affect local integrability, so we omit
$v$.  Put $C=\max_jc_j$.  Assume first that $C>0$ and choose $j_0$ with
$c_{j_0}=C$.  Let $q>0$.  On a sufficiently small coordinate polydisc the
quasi-psh function $u$ is bounded above.  Restrict all divisor coordinates
other than $z_{j_0}$ to fixed annuli and the remaining coordinates to
smaller balls.  This product set has positive transverse measure, and on
it
\[
 \exp\!\left[-q\left(\sum_jc_j\log|z_j|^2+u\right)\right]
 \ge C_q|z_{j_0}|^{-2qC}
\]
for some constant $C_q>0$.  Tonelli's theorem and
$\int_{|z|<\varepsilon}|z|^{-2qC}dA(z)=+\infty$ for $qC\ge1$, where
$dA$ is Euclidean area measure, prove non-integrability in that range.

Now assume $qC<1$.  Choose $p_H>1$ such that $p_HqC<1$ and put
$p_H'=p_H/(p_H-1)$.  The monomial
$\prod_j|z_j|^{-2p_Hqc_j}$ is locally integrable.  Since $\nu_0(u)=0$,
Skoda's theorem, with the normalization \eqref{eq:ddc-normalization}, gives
$\exp(-qp_H'u)\in L^1_{\rm loc}$.  H\"older's inequality therefore gives
local integrability for every $q<1/C$.  Combining the two implications
shows that the complex singularity exponent is $1/C$, proving
\eqref{eq:SNC-arnold}.  If $C=0$, Skoda's theorem applies for every finite
$q>0$, so the exponent is $+\infty$ and the Arnold multiplicity is zero.

For the last assertion, write $T-c[D]\ge0$.  At a smooth point of $D$, a
local potential of $T$ has the form $c\log|f_D|^2+w+h$, where $f_D$ is a
local defining function of $D$, $w$ is plurisubharmonic, and $h$ is
smooth.  The function $w+h$ is locally bounded above.  The preceding
non-integrability argument, which does not require a zero Lelong number,
shows that the complex singularity exponent is at most $1/c$ when $c>0$.
Thus the Arnold multiplicity is at least $c$; the case $c=0$ is
tautological.  See \cite{Dem-book,Siu74}.
\end{proof}

\subsection{Positive classes, full mass, and divisorial Zariski decompositions}
\label{subsec:zariski-preliminaries}

In this subsection $X$ is a compact K\"ahler manifold of complex dimension
$n$, and $\omega_X$ is a fixed K\"ahler form on $X$.  A real
$(1,1)$-class $\alpha$ is
\emph{pseudoeffective} if it contains a
closed positive current; it is \emph{nef} if for every $\varepsilon>0$ it
has a smooth representative bounded below by $-\varepsilon\omega_X$; it
is \emph{big} if it contains a K\"ahler current; and it is K\"ahler if it
contains a K\"ahler form.  A positive current $T_{\min}\in\alpha$ has
\emph{minimal singularities} if its potential is less singular than the
potential of every other positive current in $\alpha$.

For a big class $\alpha=\{\theta\}$ and
$u\in\PSH(X,\theta)$, the non-pluripolar product
$\langle(\theta+\ddc u)^n\rangle$ is the measure constructed in
\cite{BEGZ10}; it does not charge pluripolar sets.  Its maximal possible
mass is $\vol(\alpha)$.  We write $u\in\E(X,\theta)$ when
$u\in\PSH(X,\theta)$ and
\begin{equation}\label{eq:full-mass}
 \int_X\langle(\theta+\ddc u)^n\rangle=\vol(\alpha),
\end{equation}
and $u\in\E^1(X,\theta)$ for the finite-energy subclass, namely
full-mass potentials whose energy relative to a minimal potential is
finite; equivalently, after the usual normalization, the relative
first-moment integral is finite.  We use the precise definitions and
normalizations of \cite{BEGZ10}.

For a pseudoeffective class $\alpha$, Boucksom's generic minimal
multiplicity along a prime divisor $G$ is written $\nu_G(\alpha)$.  Its
divisorial Zariski decomposition is
\begin{equation}\label{eq:DZD}
 \alpha=P(\alpha)+\{N(\alpha)\},
 \qquad N(\alpha)=\sum_G\nu_G(\alpha)G.
\end{equation}
Thus $N(\alpha)$ is an effective real divisor and $[N(\alpha)]$ is its
positive integration current.  The definition and the Siu decomposition
give
\begin{equation}\label{eq:Bou-domination}
 T\ge[N(\alpha)]\qquad\text{for every closed positive current $T$ with }
 \{T\}=\alpha.
\end{equation}
On a compact K\"ahler surface, $P(\alpha)$ is nef, its intersection with
every component of $N(\alpha)$ is zero, and the intersection matrix of
those components is negative definite \cite{Bou04}.

\begin{lemma}\label{lem:fixed-part}
Let $X$ be a compact K\"ahler surface and let
$\beta=P+\{N\}$ be a big divisorial Zariski decomposition.  Suppose $P$
has a smooth semipositive representative $\theta_P$.  Let $T$ be a closed
positive current with $\{T\}=\beta$ and full Monge--Amp\`ere mass, and
write $T=[N]+R$, where $R$ is a closed positive current with $\{R\}=P$.
Then
\begin{equation}\label{eq:fixed-product}
 \langle T^2\rangle=\langle R^2\rangle
 \quad\text{and}\quad R\text{ has full mass in }P.
\end{equation}
After writing $N=\sum_G\eta_GG$ over its prime components and
$R=\theta_P+\ddc\widehat u$, one has
$\nu_x(\widehat u)=0$ for every $x\in X$.
\end{lemma}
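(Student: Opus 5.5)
The plan is to first produce the decomposition $T=[N]+R$, then compare non-pluripolar products off the pluripolar set $\Supp N$, then use the surface volume identity to get full mass, and finally apply Darvas--Di Nezza--Lu to read off the Lelong numbers.

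\emph{Decomposition.} By the domination property \eqref{eq:Bou-domination}, $T\ge[N]$, so $R:=T-[N]$ is a closed positive current with $\{R\}=\beta-\{N\}=P$. Since $\theta_P$ is a smooth representative of $P$, the $\ddc$-lemma gives $R=\theta_P+\ddc\widehat u$ for some $\widehat u\in\PSH(X,\theta_P)$.

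\emph{Equality of non-pluripolar products.} Write $T=\theta+\ddc u$ with $\theta:=\theta_P+\sum_G\eta_G\theta_G$. By Poincar\'e--Lelong \eqref{eq:PL}, after adjusting by a constant,
\[
 u=\widehat u+\sum_G\eta_G\ell_G .
\]
On the open set $U:=X\setminus\Supp N$ the functions $\ell_G$ are smooth, and $T|_U=R|_U$ as currents. The non-pluripolar product of \cite{BEGZ10} is local in the plurifine topology, so $\langle T^2\rangle$ and $\langle R^2\rangle$ coincide on $U$. Neither measure charges the pluripolar set $\Supp N$, hence $\langle T^2\rangle=\langle R^2\rangle$ on $X$.

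\emph{Full mass.} We know $\int_X\langle T^2\rangle=\vol(\beta)$. By Boucksom \cite{Bou04}, $\vol(\beta)=\vol(P(\beta))$. On a surface with $P$ nef, $\vol(P)=P^2=\int_X\theta_P^2$. Consequently
\[
 \int_X\langle R^2\rangle=\vol(P)>0 ,
\]
so $P$ is big and $\widehat u\in\E(X,\theta_P)$.

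\emph{Lelong numbers.} Because $\theta_P\ge0$, the function $0$ lies in $\PSH(X,\theta_P)$. It is therefore a potential with minimal singularities, and its Lelong numbers vanish at every point. By \cite[Theorem~1.1]{DDL18}, a full-mass potential in a big class has the same point Lelong numbers as a minimal-singularity potential. This gives $\nu_x(\widehat u)=0$ for every $x\in X$.

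The main obstacle is expected to be this last step, namely matching the hypotheses of \cite{DDL18} exactly. One must check that the statement there is pointwise at every $x$, not only along generic points of divisors, and that it applies with a merely semipositive (non-K\"ahler) reference form $\theta_P$ representing a big nef class. A secondary subtlety is justifying the plurifine locality in the second step: both functions are only quasi-psh, and $\langle R^2\rangle$ is a priori defined relative to the class $P$ rather than $\beta$. I would handle this by restricting to the plurifine open sets $\{u>\ell-k\}\cap U$ and invoking the locality statement of \cite{BEGZ10} directly.
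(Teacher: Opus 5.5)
Your proof is correct. The first two steps match the paper's argument exactly: agreement of $\langle T^2\rangle$ and $\langle R^2\rangle$ off the pluripolar set $\Supp N$, then $\vol(\beta)=\vol(P)$ from \cite{Bou04}.

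The only difference is in the Lelong-number step. You apply \cite[Theorem~1.1]{DDL18} directly in the class $P$ to $R=\theta_P+\ddc\widehat u$. This uses three inputs: the full mass of $R$ just obtained, the bigness of $P$ from $\vol(P)=\vol(\beta)>0$, and $V_{\theta_P}=0$ because $\theta_P\ge0$. The paper instead applies the same theorem in $\beta$ to $T$ itself. By \eqref{eq:Bou-domination}, $S\mapsto S+[N]$ is a bijection between positive currents in $P$ and in $\beta$, so $[N]+\theta_P$ has minimal singularities in $\beta$. This gives $\nu_x(T)=\nu_x([N])$, and additivity $\nu_x(T)=\nu_x([N])+\nu_x(R)$ then forces $\nu_x(R)=0$.

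Your version is slightly shorter and never needs to identify the minimal-singularity current of $\beta$. On the other hand, it makes the Lelong-number conclusion depend on the full-mass conclusion, whereas the paper's version uses only the full-mass hypothesis on $T$.

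Neither of the obstacles you anticipate is a genuine one. The cited theorem of \cite{DDL18} is a pointwise statement for an arbitrary smooth representative of a big class, and the paper uses it in that same form. Also, the non-pluripolar product is built from local potentials. So $T|_U=R|_U$ on the open set $U=X\setminus\Supp N$ already gives $\langle T^2\rangle|_U=\langle R^2\rangle|_U$, without comparing global potentials or passing to plurifine open sets.
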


\begin{proof}
By plurifine locality, the non-pluripolar products agree off
$\Supp N$, and neither measure charges that pluripolar set
\cite[Proposition~1.4]{BEGZ10}.  This proves the measure identity.
Boucksom's volume formula gives $\vol(\beta)=\vol(P)$
\cite[Proposition~3.20]{Bou04}; since $P$ is nef on a surface,
$\vol(P)=P^2$.  Integrating the measure identity and using the full mass of
$T$ proves that $R$ has full mass.

Every positive current with cohomology class $\beta$ contains $[N]$ by
\eqref{eq:Bou-domination}; subtracting $[N]$ therefore gives a positive
current in $P$.  Conversely, adding $[N]$ sends every positive current in
$P$ to one in $\beta$.  Since the smooth semipositive form $\theta_P$ has
minimal singularities in $P$, this correspondence shows that
$[N]+\theta_P$ has minimal singularities in $\beta$.  The
pointwise Lelong-number theorem for full-mass currents
\cite[Theorem~1.1]{DDL18} gives
$\nu_x(T)=\nu_x([N]+\theta_P)=\nu_x([N])$ for every $x\in X$.  On the
other hand, additivity of Lelong numbers in $T=[N]+R$ gives
$\nu_x(T)=\nu_x([N])+\nu_x(R)$.  Hence $\nu_x(R)=0$; since $\theta_P$ is
smooth, this is equivalent to $\nu_x(\widehat u)=0$.
\end{proof}

\subsection{Maximal weak solutions and twisted K\"ahler--Ricci flow}
\label{subsec:maximal-flow}

Fix a smooth positive reference volume form $dV_X$.  In a holomorphic
coordinate chart, write $dV_X=\rho\,dV_{\rm eucl}$, where
$dV_{\rm eucl}$ is the Euclidean volume form and $\rho$ is a smooth
positive density, and define its
\emph{Ricci form} by
\begin{equation}\label{eq:ricci-reference-volume}
 \operatorname{Ric}(dV_X):=-\ddc\log\rho.
\end{equation}
Under a holomorphic coordinate change, the two densities differ by the
modulus squared of a nonvanishing holomorphic Jacobian.  Its logarithm is
pluriharmonic, so \eqref{eq:ricci-reference-volume} defines a global smooth
closed real $(1,1)$-form.  This fixed form is the \emph{twisting form}.

For $\varphi_0\in\PSH(X,\omega)$ and $j\in\mathbb N$, choose smooth
$\omega$-psh functions $\varphi_{0,j}\downarrow\varphi_0$ and let
$\varphi_{t,j}$ be the smooth solutions of \eqref{eq:flow-intro}.  The
pointwise decreasing limit is independent of the approximation and is the
\emph{maximal weak solution} of the parabolic complex Monge--Amp\`ere
equation.  The associated current path
$\omega_t:=\omega+\ddc\varphi_t$ is the weak twisted
K\"ahler--Ricci flow determined by $(\varphi_t)$, and it satisfies
\eqref{eq:twisted-flow-intro} on its regular locus.  As $t\downarrow0$,
the potentials converge to $\varphi_0$ in $L^1$ and
in capacity and are smooth on each open set furnished by the regularity results
\cite{GZ17a,DNL17,DGL26cusps}.  All comparisons below refer to this specific
monotone construction.  Equalities of currents are distributional on $X$;
parabolic equations are classical on the explicitly stated open set.

\section{Dynamic divisorial lower bounds and Zariski chambers}
\label{sec:dynamic}

The following unconditional estimate extends the nonexceptional-divisor
bound to the divisorial valuations
appearing on an admissible resolution.

\begin{theorem}\label{thm:dynamic}
Let $(X,\omega)$ be a compact K\"ahler manifold, let $dV_X$ be a smooth
positive volume form, and let $\varphi_0\in\PSH(X,\omega)$ have
DGL-compatible analytic singularities.  Let
$\pi:Y\to X$ be an admissible principalizing log resolution with data
\eqref{eq:resolution-data}--\eqref{eq:jacobian}, where $q_D\ge1$.  Denote
by $(\varphi_t)$ the maximal weak solution from $\varphi_0$ and put
\begin{equation}\label{eq:dynamic-data}
 \begin{gathered}
 A_j=1+b_j,\qquad
 t_1=\min_j\frac{m_j}{A_j},\qquad
 \Theta_\pi=\sum_{j=1}^{q_D}A_j\Theta_j,\\
 \alpha_t=\{R_{\pi,0}+t\Theta_\pi\}
 \in H^{1,1}(Y,\mathbb R).
 \end{gathered}
\end{equation}
For $0<t<t_1$ there is a closed positive $(1,1)$-current $Q_t$ on $Y$
such that
\begin{equation}\label{eq:extracted-current}
 \pi^*(\omega+\ddc\varphi_t)
 =\sum_{j=1}^{q_D}(m_j-A_jt)[D_j]+Q_t,
 \qquad \{Q_t\}=\alpha_t.
\end{equation}
In particular $\alpha_t$ is pseudoeffective and
\begin{equation}\label{eq:dynamic-current-bound}
 \pi^*(\omega+\ddc\varphi_t)
 \ge \sum_{j=1}^{q_D}(m_j-A_jt)[D_j]+[N(\alpha_t)].
\end{equation}
For every $x\in X$,
\begin{equation}\label{eq:valuation-bound}
 \arn(\varphi_t,x)\ge
 \max_{\substack{1\le j\le q_D\\x\in c_X(D_j)}}
 \frac{m_j-A_jt+\nu_{D_j}(\alpha_t)}{A_j},
\end{equation}
where the maximum over an empty set is zero.
\end{theorem}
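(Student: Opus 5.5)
The proof has three steps: build $Q_t$ from the DGL potential decomposition, dominate it by Boucksom's negative part, and then push the resulting current bound down to $X$ by a valuative integrability computation.

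\textbf{Step 1: the current $Q_t$.} By DGL-compatibility the conversion \eqref{eq:DGL-potential-conversion} applies. For $0<t<t_1$ it gives
\[
\varphi_t\circ\pi=\sum_j(m_j-A_jt)\ell_j+2v_{t/2},
\qquad
2v_{t/2}\in\PSH\!\left(Y,R_{\pi,0}+t\Theta_\pi\right).
\]
Set $Q_t:=R_{\pi,0}+t\Theta_\pi+\ddc(2v_{t/2})$; this is a closed positive current of class $\alpha_t$. Applying $\ddc$ to the potential identity and using Poincar\'e--Lelong \eqref{eq:PL} gives
\[
\ddc\ell_j=[D_j]-\Theta_j .
\]
Combining this with the metric normalization \eqref{eq:resolution-metric-normalization}, namely $\pi^*\omega=R_{\pi,0}+\sum_jm_j\Theta_j$, the smooth terms rearrange. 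The coefficient of $\Theta_j$ becomes $m_j-(m_j-A_jt)=A_jt$, which is absorbed into $t\Theta_\pi$. This yields \eqref{eq:extracted-current}. Pseudoeffectivity of $\alpha_t$ is then immediate, since $Q_t\ge0$.

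\textbf{Step 2: the current bound.} Apply Boucksom domination \eqref{eq:Bou-domination} on the compact K\"ahler manifold $Y$ to $Q_t\in\alpha_t$. This gives $Q_t\ge[N(\alpha_t)]$, and \eqref{eq:dynamic-current-bound} follows. In particular the generic Lelong number of $\pi^*\omega_t$ along $D_j$ satisfies
\[
\nu_{D_j}(\pi^*\omega_t)\ge m_j-A_jt+\nu_{D_j}(\alpha_t)=:c_j .
\]

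\textbf{Step 3: the Arnold bound on $X$.} Fix $x\in c_X(D_j)$ and $q>0$, and suppose $e^{-q\varphi_t}$ is integrable near $x$. Pulling back by $\pi$ and using the Jacobian formula \eqref{eq:jacobian}, the function $e^{-q\varphi_t\circ\pi}\prod_k|s_k|^{2b_k}$ is then integrable on $\pi^{-1}(U)$ for a neighbourhood $U$ of $x$. The set $\pi^{-1}(U)$ meets $D_j$ in a nonempty open subset of $D_j$.

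Choose a generic point $y$ of $D_j\cap\pi^{-1}(U)$ off the other components, with local coordinate $z_1$ defining $D_j$. There $\varphi_t\circ\pi\le c_j\log|z_1|^2+O(1)$ from above, by Step 2 and the last part of \cref{lem:SNC}. The Jacobian factor is comparable to $|z_1|^{2b_j}$. The integrand is therefore at least $C|z_1|^{-2(qc_j-b_j)}$, which fails to be integrable once $qc_j-b_j\ge1$, i.e.\ $q\ge A_j/c_j$ (assuming $c_j>0$, which holds since $t<t_1$). Hence $\lct(\varphi_t,x)\le A_j/c_j$, so $\arn(\varphi_t,x)\ge c_j/A_j$. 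Taking the maximum over $j$ gives \eqref{eq:valuation-bound}; if no $j$ qualifies, the empty-maximum convention makes the bound trivial.

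The main obstacle is Step 1, specifically the bookkeeping of conventions. We must verify that the DGL lemma's hypotheses (DGL-compatibility, the time threshold, the metric normalization) match ours, so that the smooth terms cancel exactly to leave class $\alpha_t$. The rest is standard Boucksom--Siu theory and a Tonelli computation.
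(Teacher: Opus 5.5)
Your proposal is correct and follows essentially the same route as the paper. You extract $Q_t$ from the converted DGL potential identity via Poincar\'e--Lelong and the metric normalization, dominate it by $[N(\alpha_t)]$ using Boucksom, and bound the exponent by a Tonelli computation at a generic point of $D_j$ over a neighbourhood of $x$, where the Jacobian contributes $|z_1|^{2b_j}$ and divergence occurs exactly when $qc_j\ge A_j$.

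One citation is slightly off. The bound $\varphi_t\circ\pi\le c_j\log|z_1|^2+O(1)$ comes from writing the local potential of the positive current $\pi^*(\omega+\ddc\varphi_t)-c_j[D_j]$ as $c_j\log|z_1|^2+w+h$, as in the proof of \cref{lem:SNC}. It does not come from that lemma's stated Arnold-multiplicity conclusion.
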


\begin{proof}
Set $T_t:=\pi^*(\omega+\ddc\varphi_t)$.  By
\eqref{eq:DGL-potential-conversion}, there is a function
$u_t:=2v_{t/2}$ such that
\[
 \varphi_t\circ\pi
 =\sum_{j=1}^{q_D}(m_j-A_jt)\ell_j+u_t,
 \qquad
 u_t\in\PSH(Y,R_{\pi,0}+t\Theta_\pi).
\]
Applying the Poincar\'e--Lelong identity \eqref{eq:PL} term by term and
using \eqref{eq:resolution-metric-normalization} gives
\[
 T_t=\sum_{j=1}^{q_D}(m_j-A_jt)[D_j]+Q_t,
 \qquad
 Q_t:=R_{\pi,0}+t\Theta_\pi+\ddc u_t\ge0.
\]
Taking cohomology and using \eqref{eq:resolution-data} and
$\{D_j\}=\{\Theta_j\}$ gives
\begin{align*}
 \{Q_t\}
 &=\pi^*\{\omega\}-\sum_{j=1}^{q_D}(m_j-A_jt)\{D_j\}\\
 &=\{R_{\pi,0}\}+t\sum_{j=1}^{q_D}A_j\{D_j\}
 =\{R_{\pi,0}+t\Theta_\pi\}=\alpha_t.
\end{align*}
This proves \eqref{eq:extracted-current} and shows, before any use of a
Zariski decomposition, that $\alpha_t$ is pseudoeffective.  Applying
\eqref{eq:Bou-domination} to the positive current $Q_t$ yields
\eqref{eq:dynamic-current-bound}.  The current $Q_t$ may still have
additional divisorial Siu components; thus \eqref{eq:extracted-current} is
an extraction of the prescribed components, not the complete Siu
decomposition.  The form $R_{\pi,0}+t\Theta_\pi$ is merely a smooth
representative of $\alpha_t$; it need not be semipositive.

Fix $j$ with $x\in c_X(D_j)$ and set
\[
 \gamma_j(t):=m_j-A_jt+\nu_{D_j}(\alpha_t)>0.
\]
Let $U$ be an arbitrary coordinate neighbourhood of $x$.  Choose a smooth
point $y\in D_j\cap\pi^{-1}(U)$ that lies on none of the other $D_\ell$.
Such points form a dense open subset of $D_j\cap\pi^{-1}(U)$.  By
\eqref{eq:dynamic-current-bound}, $T_t-\gamma_j(t)[D_j]$ is positive.  In
coordinates $(z_1,\ldots,z_n)$ centred at $y$ with $D_j=\{z_1=0\}$, a
local potential therefore has the form
\[
 \varphi_t\circ\pi=\gamma_j(t)\log|z_1|^2+w+h,
\]
where $w$ is plurisubharmonic and $h$ is smooth.  On a relatively compact
polydisc about $y$, $w+h$ is bounded above, while
\eqref{eq:jacobian} takes the form
\[
 \pi^*dV_X=|z_1|^{2b_j}g\,dV_{\rm eucl},
 \qquad g>0\ \text{smooth}.
\]
Here $dV_{\rm eucl}$ is the Euclidean volume form in these coordinates.
Tonelli's theorem and change of variables therefore imply that, for every
$q>0$, there are constants $C,\varepsilon>0$ such that
\[
 \int_U\exp(-q\varphi_t)dV_X
 \ge C\int_{|z_1|<\varepsilon}
 |z_1|^{2b_j-2q\gamma_j(t)}dA(z_1)
\]
where $dA$ is Euclidean area measure.  The last integral diverges
exactly when $q\gamma_j(t)\ge1+b_j=A_j$.  Since $U$ was arbitrary,
$\lct(\varphi_t,x)\le A_j/\gamma_j(t)$, and hence
$\arn(\varphi_t,x)\ge\gamma_j(t)/A_j$.  Maximizing over all such $j$
proves \eqref{eq:valuation-bound}; if no $D_j$ has centre containing $x$,
the stated bound is zero by convention.
\end{proof}

For real divisorial data on the original manifold, a generic-point
argument removes the coefficient restriction in
\cite[Definition~1.1]{DGL26geometry}.

\begin{lemma}\label{lem:real-persistence}
Let $(X,\omega)$ be a compact K\"ahler manifold, let $dV_X$ be a smooth
positive volume form, and let $\varphi_0\in\PSH(X,\omega)$.  Let
$D_1,\ldots,D_{r_0}\subset X$ be distinct prime divisors and suppose
\begin{equation}\label{eq:real-initial}
 \omega+\ddc\varphi_0=\sum_{i=1}^{r_0}m_i[D_i]+R_{\rm init},
 \qquad m_i>0,
\end{equation}
where $R_{\rm init}$ is a closed positive $(1,1)$-current on $X$ with zero
generic Lelong number along each $D_i$.  Denote by $(\varphi_t)$ the
maximal weak solution from $\varphi_0$.  Then
\begin{equation}\label{eq:real-persistence-bound}
 \omega+\ddc\varphi_t\ge\sum_{i=1}^{r_0}(m_i-t)[D_i]
 \qquad\left(0<t<\min_i m_i\right).
\end{equation}
\end{lemma}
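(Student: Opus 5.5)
The plan is to reduce \eqref{eq:real-persistence-bound} to a statement about generic Lelong numbers, and to obtain that statement from the pointwise lower bound in \eqref{eq:DGL-two-sided}, evaluated at very generic points of each $D_i$. At such points the Arnold multiplicity agrees with the generic Lelong number, by \cref{lem:SNC}.

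\textbf{Step 1: Siu decomposition at time $t$.} Fix $t\in(0,\min_i m_i)$. Write the Siu decomposition
\[
 \omega+\ddc\varphi_t=\sum_i\nu_i(t)[D_i]+R_t,
\]
where $\nu_i(t)$ is the generic Lelong number of $\omega_t$ along $D_i$. The current $R_t$ is closed and positive, and its generic Lelong number along every $D_i$ is zero. It then suffices to prove $\nu_i(t)\ge m_i-t$ for each $i$.

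\textbf{Step 2: choice of a very generic point.} For each rational $c>0$, Siu's theorem says that the upper level sets
\[
 E_c(R_t),\qquad E_c(R_{\rm init})
\]
are analytic. They do not contain $D_i$, so they meet $D_i$ in proper analytic subsets. Removing these countably many proper subsets, together with $D_i\cap D_j$ for $j\ne i$ and $\operatorname{Sing}D_i$, leaves a nonempty set of points $y\in D_i$. At such a point:
\begin{itemize}
\item $D_i$ is smooth and no other $D_j$ passes through $y$;
\item $\nu_y(R_t)=0$ and $\nu_y(R_{\rm init})=0$.
\end{itemize}
Take local coordinates with $D_i=\{z_1=0\}$ near $y$. \cref{lem:SNC}, applied with the single divisor $D_i$, zero bounded term, and the quasi-psh term given by the potential of $R_t$ (respectively of $R_{\rm init}$), gives
\[
 \arn(\varphi_t,y)=\nu_i(t),\qquad \arn(\varphi_0,y)=m_i .
\]

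\textbf{Step 3: the DGL pointwise bound, in our normalization.} Use the lower half of \eqref{eq:DGL-two-sided}, namely \cite[Theorem~1.9]{DGL26geometry}, transported through \eqref{eq:DGL-normalization-conversion}. With $\psi_s=\varphi_{2s}/2$ and $\arn(2u,x)=2\arn(u,x)$,
\[
 \arn(\varphi_t,y)=2\arn(\psi_{t/2},y)\ge 2\bigl(\arn(\psi_0,y)-t/2\bigr)=m_i-t .
\]
Together with Step 2 this gives $\nu_i(t)\ge m_i-t$, which proves the lemma.

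That theorem requires $0<\arn(\varphi_0)<\infty$.
\begin{itemize}
\item Positivity: at generic points of $D_i$, Step 2 gives $\arn(\varphi_0,\cdot)=m_i>0$.
\item Finiteness: Skoda's estimate gives $\arn(u,x)\le n\,\nu_x(u)$, and the Lelong numbers of the $\omega$-psh function $\varphi_0$ are uniformly bounded on the compact manifold $X$.
\end{itemize}

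\textbf{Main obstacle.} The delicate point is to check that the lower estimate in \cite[Theorem~1.9]{DGL26geometry} is valid for the present initial data. If it is only stated for DGL-compatible analytic singularities, I would instead argue by approximation. First choose initial potentials $\varphi_0^{(k)}\ge\varphi_0$ with analytic, rational-weight divisorial singularities whose weights $m_i^{(k)}\uparrow m_i$; one way is to replace $R_{\rm init}$ by a smooth form, and to perturb $\omega$ by a small K\"ahler form to absorb the cohomology change. By the comparison principle for maximal solutions, $\varphi_t^{(k)}\ge\varphi_t$. Upper-semicontinuity of Lelong numbers under decreasing limits then transfers the generic bound $m_i^{(k)}-t$. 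The twisting changes slightly with the perturbation of $\omega$, and this perturbation must be controlled; it costs only $O(1/k)$ in the generic coefficient.
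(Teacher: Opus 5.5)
Your proof is correct and is essentially the paper's own argument. You Siu-decompose $\omega+\ddc\varphi_t$, then use Siu's analyticity theorem and Baire category to pick a smooth point $y\in D_i$, off the other $D_j$, where both residual Lelong numbers vanish. There you identify $\arn(\varphi_0,y)=m_i$ and $\arn(\varphi_t,y)=\nu_i(t)$ via \cref{lem:SNC}, and conclude with the translated pointwise lower bound of \cite[Theorem~1.9]{DGL26geometry}.

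The paper applies that theorem directly for general $\varphi_0$ under the hypothesis $0<\arn(\varphi_0)<+\infty$, so your approximation fallback in the last paragraph is not needed. Your explicit check of that hypothesis, which the paper leaves implicit, is a useful addition; in this normalization Skoda in fact gives the sharper bound $\arn(u,x)\le\nu_x(u)$.
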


\begin{proof}
Fix $i$ and $t\in(0,\min_jm_j)$, and let $\mu_i(t)$ be the generic Siu
coefficient of $T_t:=\omega+\ddc\varphi_t$ along $D_i$.  In the Siu
decomposition
\[
 T_t=\mu_i(t)[D_i]+R_{t,i},
\]
the residual current $R_{t,i}$ has generic Lelong number zero along
$D_i$ by the definition of $\mu_i(t)$; the same is assumed for
$R_{\rm init}$.  For each integer $p\ge1$, Siu's analyticity
theorem \cite[Chapter~III, Corollary~8.5]{Dem-book} shows that
\[
 \{y\in D_i:\nu_y(R_{\rm init})\ge p^{-1}\},\qquad
 \{y\in D_i:\nu_y(R_{t,i})\ge p^{-1}\}
\]
are proper analytic subsets of $D_i$.  Their union over $p$, together with
the singular locus of $D_i$ and the intersections with the other
divisors, is a countable union of closed nowhere-dense subsets.  The Baire
category theorem therefore provides
$x_{i,t}\in D_i\setminus\bigcup_{\ell\ne i}D_\ell$ at which $D_i$ is
smooth and both residual point Lelong numbers vanish.  By \cref{lem:SNC},
\[
 \arn(\varphi_0,x_{i,t})=m_i,
 \qquad \arn(\varphi_t,x_{i,t})=\mu_i(t).
\]
The pointwise lower estimate in
\cite[Theorem~1.9]{DGL26geometry}, translated by
\eqref{eq:DGL-normalization-conversion}, gives
$\mu_i(t)\ge m_i-t$.  Since this holds for every $i$ and every fixed $t$
in the stated interval, the Siu decomposition of $T_t$ proves
\eqref{eq:real-persistence-bound}.
\end{proof}

\begin{proposition}\label{prop:chamber}
Let $X$ be a compact K\"ahler surface, let $I\subset\mathbb R$ be an
interval, and let $\alpha_0,\gamma\in H^{1,1}(X,\mathbb R)$.  Assume that
$\alpha_t:=\alpha_0+t\gamma$ is big for every $t\in I$ and that, for some
distinct prime curves $C_1,\ldots,C_{r_C}$,
\[
 \Supp N(\alpha_t)=\bigcup_{i=1}^{r_C}C_i\qquad(t\in I).
\]
If $r_C\ge1$, write $N(\alpha_t)=\sum_{i=1}^{r_C}n_i(t)C_i$ and let
$M=(C_i\cdot C_j)_{1\le i,j\le r_C}$.  Then
\begin{equation}\label{eq:chamber-matrix}
 (n_i(t))_{i=1}^{r_C}
 =M^{-1}(\alpha_t\cdot C_j)_{j=1}^{r_C}.
\end{equation}
Thus every $n_i(t)$ is affine on $I$.  Suppose in addition that $X$ is
projective and $\alpha_0,\gamma\in N^1(X)_{\mathbb R}$, where
$N^1(X)_{\mathbb R}$ is the real N\'eron--Severi space.  For every compact
subinterval $J\Subset I$, only finitely many prime curves occur in
$\Supp N(\alpha_t)$ for $t\in J$, and each coefficient
$t\mapsto\operatorname{coeff}_C N(\alpha_t)$ is piecewise affine on $J$.
\end{proposition}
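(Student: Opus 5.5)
The first part reduces to Boucksom's orthogonality relations on a surface; the second uses local finiteness of Zariski chambers on projective surfaces.

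\emph{The matrix formula.} Fix $t\in I$ and write $\alpha_t=P(\alpha_t)+\sum_i n_i(t)\{C_i\}$. By the surface properties recalled after \eqref{eq:Bou-domination}, $P(\alpha_t)\cdot C_j=0$ for every component $C_j$ of $N(\alpha_t)$. By hypothesis these components are exactly $C_1,\ldots,C_{r_C}$. Intersecting the decomposition with each $C_j$ then gives
\[
 \alpha_t\cdot C_j=\sum_{i=1}^{r_C}n_i(t)\,(C_i\cdot C_j),\qquad 1\le j\le r_C.
\]
The matrix $M$ is symmetric and negative definite, again by the recalled surface properties, so it is invertible. Solving this linear system yields \eqref{eq:chamber-matrix}. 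Since $t\mapsto\alpha_t\cdot C_j=\alpha_0\cdot C_j+t\,\gamma\cdot C_j$ is affine and $M$ does not depend on $t$, each $n_i(t)$ is affine on $I$.

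\emph{Local finiteness.} Now assume $X$ is projective and $\alpha_0,\gamma\in N^1(X)_{\mathbb R}$, and fix $J\Subset I$. The segment $\{\alpha_t:t\in J\}$ is a compact subset of the big cone of $N^1(X)_{\mathbb R}$. By \cite[Theorem~1.1 and Proposition~1.13]{BKS04}, the big cone is decomposed into Zariski chambers, on each of which the support of the negative part is constant, and this decomposition is locally finite. Compactness therefore shows that the segment meets only finitely many chambers. Two consequences follow:
\begin{itemize}
\item only finitely many prime curves occur in $\Supp N(\alpha_t)$ for $t\in J$;
\item $J$ is partitioned into finitely many subintervals on each of which $\Supp N(\alpha_t)$ is constant.
\end{itemize}
On each such subinterval the first part applies, or the negative part vanishes identically, so every coefficient $t\mapsto\operatorname{coeff}_C N(\alpha_t)$ is affine there. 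Hence these coefficients are piecewise affine on $J$. Chambers need not be convex, but each intersection of a chamber with the segment is a finite union of intervals, so the partition can always be refined to intervals.

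\emph{Main obstacle.} The delicate point is matching the Kähler-setting divisorial Zariski decomposition of \cite{Bou04} with the algebraic Zariski decomposition used by Bauer--Küronya--Szemberg. I would cite the agreement of Boucksom's decomposition with Fujita--Zariski's decomposition for big real classes on projective surfaces. That agreement lets the chamber structure be transferred.
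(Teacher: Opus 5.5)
Your proposal is correct and is essentially the paper's argument: orthogonality $P(\alpha_t)\cdot C_j=0$ plus negative definiteness of $M$ gives \eqref{eq:chamber-matrix}, and the projective assertion comes from the local finiteness of Zariski chambers in \cite[Theorem~1.1 and Proposition~1.13]{BKS04}. Your compactness and partition step, and the identification of Boucksom's decomposition with the Fujita--Zariski one (which follows from uniqueness under the surface properties recalled after \eqref{eq:Bou-domination}), only make explicit what the paper leaves to that citation.

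One small correction: the set of $t$ at which $\Supp N(\alpha_t)$ is a prescribed set is in fact convex. If $\alpha,\alpha'$ are big with negative parts of the same support, then $\bigl(P(\alpha)+P(\alpha')\bigr)+\bigl(N(\alpha)+N(\alpha')\bigr)$ satisfies that uniqueness characterization for $\alpha+\alpha'$. Hence each chamber meets the segment in a single interval, and your unproved ``finite union of intervals'' hedge is unnecessary.
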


\begin{proof}
The case $r_C=0$ is vacuous.  Assume $r_C\ge1$.
The intersection matrix $M$ is negative definite and hence invertible.
If $\alpha_t=P_t+\sum_i n_i(t)\{C_i\}$, orthogonality gives
$P_t\cdot C_j=0$.  Consequently
$\alpha_t\cdot C_j=\sum_i n_i(t)(C_i\cdot C_j)$, which is
\eqref{eq:chamber-matrix}.  The algebraic local-finiteness assertion is
due to Bauer--K\"urony\'a--Szemberg
\cite[Theorem~1.1 and Proposition~1.13]{BKS04}.  We make no such assertion
for transcendental classes or at the pseudoeffective boundary.
\end{proof}

\section{Exact Arnold-multiplicity profiles under adapted Zariski hypotheses}
\label{sec:surface-exactness}

\begin{definition}\label{def:adapted-package}
Let $D=\sum_{i=1}^{r_0}D_i$ be a reduced SNC divisor on a compact
K\"ahler surface, put $\beta=\sum_i\{D_i\}$, and write the divisorial
Zariski decomposition of $\beta$ as
\begin{equation}\label{eq:good-ZD}
 \beta=P(\beta)+\{N(\beta)\}.
\end{equation}
Set $P:=P(\beta)$ and
$N:=N(\beta)=\sum_G\eta_GG$, where $G$ ranges over the prime components
of $N$ and $\eta_G>0$, and put $N_{\rm red}:=\sum_GG$.  The decomposition
\eqref{eq:good-ZD} is an \emph{adapted semipositive Zariski package} if:
\begin{enumerate}[label=\textup{(Z\arabic*)}]
\item $P$ has a smooth semipositive representative $\theta_P$ with
      $\int_X\theta_P^2>0$;
\item $D+N_{\mathrm{red}}$ has SNC support;
\item there is an effective real divisor $E_{\mathrm{aux}}$ such that
      $P-\{E_{\mathrm{aux}}\}$ is K\"ahler,
      $D+N_{\mathrm{red}}+(E_{\mathrm{aux}})_{\mathrm{red}}$ has SNC
      support, and
      \begin{equation}\label{eq:Eaux-support}
       \Supp E_{\mathrm{aux}}\subset\Supp D\cup\Supp N.
      \end{equation}
\end{enumerate}
\end{definition}

The support condition \eqref{eq:Eaux-support} guarantees that the
finite-cylinder comparison has no auxiliary boundary without a
logarithmic pole.  It is automatically satisfied in the applications
below.

\begin{lemma}
\label{lem:finite-comparison}
Let $(X,\omega)$ be a compact K\"ahler surface, let $dV_X$ be a smooth
positive volume form, let $\varphi_0\in\PSH(X,\omega)$, and denote by
$(\varphi_t)$ its maximal weak solution.  Let $\mathcal G$ be a nonempty finite set of
prime divisors such that $Z:=\sum_{G\in\mathcal G}G$ is SNC, and set
$\Omega=X\setminus\Supp Z$.  Fix $\tau_*>0$.  Suppose
$\Psi_t\in\PSH(X,\omega)$ for $0\le t\le\tau_*$,
$\Psi_0=\varphi_0$, and:
\begin{enumerate}[label=\textup{(C\arabic*)}]
\item $\Psi$ is smooth on $(0,\tau_*]\times\Omega$ and
 \begin{equation}\label{eq:candidate-flow}
 (\omega+\ddc\Psi_t)^2=\exp(\dot\Psi_t)dV_X
 \quad\text{there};
 \end{equation}
\item for each $G\in\mathcal G$, choose a defining section $s_G$ and a
      smooth Hermitian metric $h_G$, and set
      $\ell_G:=\log|s_G|_{h_G}^2$.  There are continuous functions
      $a_G:(0,\tau_*]\to(0,+\infty)$ and functions $u_t$ on $X$ such that
      $\ell_G\le0$ and
      \[
       \Psi_t=\sum_{G\in\mathcal G}a_G(t)\ell_G+u_t .
      \]
      For every $0<\delta<\tau_*$,
      \[
       \min_{\substack{G\in\mathcal G\\t\in[\delta,\tau_*]}}a_G(t)>0,
       \qquad
       \sup_{[\delta,\tau_*]\times X}u_t<+\infty;
      \]
\item $\Psi_t\to\varphi_0$ in $L^1(X)$ as $t\downarrow0$.
\end{enumerate}
Then, for every smooth decreasing approximation
$\varphi_{0,j}\downarrow\varphi_0$ and its smooth flow
$\varphi_{t,j}$,
\begin{equation}\label{eq:comparison-conclusion}
 \Psi_t\le\varphi_{t,j}\quad(0<t\le\tau_*),
 \qquad\Psi_t\le\varphi_t.
\end{equation}
\end{lemma}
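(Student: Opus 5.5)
The plan is to compare $\Psi_t$ with each smooth approximating flow $\varphi_{t,j}$ by a maximum principle on the noncompact set $(0,\tau_*]\times\Omega$. The logarithmic poles of $\Psi_t$ along $Z$ force any bad maximum into the interior. The limit $j\to\infty$ then gives $\Psi_t\le\varphi_t$, since $\varphi_t=\lim_j\varphi_{t,j}$ decreasingly. Fix $j$; the function $\varphi_{t,j}$ is smooth on $[0,\tau_*]\times X$, and in particular bounded there. For $\varepsilon>0$ introduce the perturbed candidate
\[
 \Psi^\varepsilon_t:=(1-\varepsilon)\Psi_t+\varepsilon\,\rho - C\varepsilon t-\varepsilon,
\]
where $\rho$ is a fixed smooth $\omega$-psh function, say $\rho=0$, and $C$ is a constant chosen below. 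The convex combination keeps $\omega+\ddc\Psi^\varepsilon_t\ge\varepsilon\omega>0$ on $\Omega$. By concavity of $\log\det$, we get $\log\bigl((\omega+\ddc\Psi^\varepsilon_t)^2/dV_X\bigr)\ge(1-\varepsilon)\dot\Psi_t+\varepsilon\log(\omega^2/dV_X)$. Choosing $C\ge\sup_X|\log(\omega^2/dV_X)|$ then makes $\Psi^\varepsilon$ a strict subsolution, $(\omega+\ddc\Psi^\varepsilon_t)^2>\exp(\dot\Psi^\varepsilon_t)dV_X$, on $(0,\tau_*]\times\Omega$.

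The second step handles the boundary. Fix $\delta\in(0,\tau_*)$ and consider $w:=\Psi^\varepsilon_t-\varphi_{t,j}$ on $[\delta,\tau_*]\times\Omega$. By (C2), $u_t$ is bounded above uniformly there, $a_G(t)\ge a_\delta>0$, and $\ell_G\le0$ with $\ell_G\to-\infty$ at $\Supp Z$. Hence $w\to-\infty$ uniformly as one approaches $\Supp Z$, and $\sup w$ is attained on a compact subset of $\Omega$. If the maximum of $w$ over $[\delta,\tau_*]\times\Omega$ is positive and attained at an interior time $t>\delta$, then at that point $\ddc w\le0$ and $\dot w\ge0$. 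This gives $(\omega+\ddc\Psi^\varepsilon)^2\le(\omega+\ddc\varphi_{t,j})^2=e^{\dot\varphi_{t,j}}dV_X\le e^{\dot\Psi^\varepsilon}dV_X$, contradicting strictness. So $\sup_{[\delta,\tau_*]\times\Omega}w\le\max\{0,\sup_\Omega w(\delta,\cdot)\}$.

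The main obstacle is removing $\delta$, because no uniform convergence $\Psi_t\to\varphi_0$ at $t=0$ is available. Only $L^1$ convergence holds by (C3). I would argue as follows. Since $\varphi_{0,j}\ge\varphi_0$ is continuous and $\varphi_{t,j}\to\varphi_{0,j}$ uniformly, for any $\eta>0$ we have $\varphi_{\delta,j}\ge\varphi_{0,j}-\eta$ for small $\delta$. The family $\{\Psi_\delta\}$ is $\omega$-psh and converges in $L^1$ to $\varphi_0$. By Hartogs' lemma, $\limsup_{\delta\to0}\sup_K(\Psi_\delta-\varphi_{0,j})\le\sup_K(\varphi_0-\varphi_{0,j})\le0$ on $X$, using upper semicontinuity of $\varphi_0-\varphi_{0,j}$ and compactness of $X$. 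Therefore $\sup_\Omega w(\delta,\cdot)\le\eta+o(1)$. Letting $\delta\to0$ and then $\eta,\varepsilon\to0$ yields $\Psi_t\le\varphi_{t,j}$ on $\Omega$ for $0<t\le\tau_*$. The inequality extends to $X$ because $\Supp Z$ is pluripolar and both sides are quasi-psh, which gives \eqref{eq:comparison-conclusion}.
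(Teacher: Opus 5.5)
Your proposal is correct and follows essentially the paper's argument: a maximum principle for $\Psi_t-\varphi_{t,j}$ on $[\delta,\tau_*]\times\Omega$, with the logarithmic poles from \textup{(C2)} disposing of the lateral boundary, Hartogs' lemma controlling the slice $t=\delta$, determinant monotonicity at an interior maximum, and then $\varepsilon\to0$ followed by $j\to\infty$. The paper gets strictness more cheaply by subtracting $\varepsilon(t+1)$, since \textup{(C1)} already makes $\omega+\ddc\Psi_t$ positive definite on $\Omega$; your convex combination also works, with two small fixes: $C$ must be strictly larger than $\sup_X|\log(\omega^2/dV_X)|$, and at $t=\delta$ it produces an extra $O(\varepsilon)$ term, with constant depending on $\|\varphi_{0,j}\|_{C^0}$, that your bound $\eta+o(1)$ omits, which is harmless because $j$ is fixed when $\varepsilon\to0$.
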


\begin{proof}
Use the logarithmic weights from \textup{(C2)} and put
$\chi=\sum_{G\in\mathcal G}\ell_G$.  For each sufficiently large regular
value $L>0$, set $\Omega_L=\{\chi>-L\}\Subset\Omega$; this domain has smooth
boundary.  Fix $j\in\mathbb N$, $\varepsilon>0$, and a target
$(t_*,x_*)\in(0,\tau_*]\times\Omega$.

Condition \textup{(C3)} and the Hartogs lemma for $\omega$-psh functions
give
\[
 \limsup_{t\downarrow0}\sup_X(\Psi_t-\varphi_{0,j})
 \le \sup_X(\varphi_0-\varphi_{0,j})\le0.
\]
Indeed, failure of this inequality would give a sequence $t_\nu\downarrow0$
contradicting the $L^1$ Hartogs lemma.  The smooth flow from
$\varphi_{0,j}$ satisfies
$\varphi_{t,j}\to\varphi_{0,j}$ in $C^\infty(X)$.  Choose
$0<\delta<t_*$ so small that
\[
 \sup_X(\Psi_\delta-\varphi_{0,j})<\frac{\varepsilon}{4},
 \qquad
 \|\varphi_{\delta,j}-\varphi_{0,j}\|_{C^0(X)}
 <\frac{\varepsilon}{4}.
\]
\begin{equation}\label{eq:initial-boundary}
 \Psi_\delta-\varphi_{\delta,j}-\varepsilon(\delta+1)<0
 \quad\text{on }X.
\end{equation}

After fixing $\delta$, define
$c_\delta:=\min_{G\in\mathcal G,\,t\in[\delta,t_*]}a_G(t)>0$.
Condition \textup{(C2)}, together with $\ell_G\le0$, gives a constant
$C_{\delta,t_*}$ such that
\begin{equation}\label{eq:lateral-decay}
 \Psi_t\le c_\delta\chi+C_{\delta,t_*}
 \qquad(\delta\le t\le t_*).
\end{equation}
The smooth function $\varphi_{t,j}$ has a uniform lower bound on the compact
cylinder $[\delta,t_*]\times X$.  Choose a sufficiently large regular
$L$, also satisfying $x_*\in\Omega_L$, so that on $\chi=-L$ the right-hand
side of \eqref{eq:lateral-decay}, minus this lower bound and
$\varepsilon(\delta+1)$, is negative.  Then
\begin{equation}\label{eq:lateral-boundary}
 \Psi_t-\varphi_{t,j}-\varepsilon(t+1)<0
 \quad\text{on }[\delta,t_*]\times\partial\Omega_L.
\end{equation}

Set $w=\Psi_t-\varphi_{t,j}-\varepsilon(t+1)$.  It is negative on the
initial and lateral boundary.  If it had a positive maximum at
$(s,y)\in(\delta,t_*]\times\Omega_L$, then its spatial complex Hessian
would be nonpositive.  If $s<t_*$, its time derivative would vanish; if
$s=t_*$, its left time derivative would be nonnegative.  In both cases,
\begin{equation}\label{eq:max-principle-derivatives}
 \dot\Psi-\dot\varphi_{t,j}\ge\varepsilon,
 \qquad \ddc\Psi\le\ddc\varphi_{t,j}.
\end{equation}
The form $\omega+\ddc\Psi_t$ is semipositive and, by
\eqref{eq:candidate-flow}, has positive determinant, hence is positive
definite on $\Omega$.  Determinant monotonicity and the two flow equations
give $\dot\Psi\le\dot\varphi_{t,j}$ at $(s,y)$, contradicting
\eqref{eq:max-principle-derivatives}.  Thus, at the fixed target,
$\Psi_{t_*}\le\varphi_{t_*,j}+\varepsilon(t_*+1)$.  Let
$\varepsilon\downarrow0$, and then use the pointwise monotone convergence
$\varphi_{t,j}\downarrow\varphi_t$ as $j\to\infty$.  Since the target in
$\Omega$ was arbitrary, both inequalities hold there.  At a point of
$\Supp Z$ and any fixed $t>0$, condition \textup{(C2)} gives a strictly
positive coefficient for at least one logarithmic pole through that point;
hence $\Psi_t=-\infty$ there, and the comparison is global on $X$.
\end{proof}

\begin{proof}[Proof of Theorem~\ref{thm:surface-exactness}]
We normalize the divisor metrics, remove the Zariski fixed part from a
full-mass elliptic potential, and compare the resulting parabolic candidate
with the maximal flow.

Let $s_i$ be the canonical defining section of $\mathcal O_X(D_i)$.
Choose a smooth Hermitian metric $h_i$ on this line bundle, write
$\ell_i=\log|s_i|_{h_i}^2$, and let $\Theta_i$ be its smooth Chern
curvature form.  The $\ddc$-lemma gives a smooth real function $f$ with
$\omega-\sum_i m_i\Theta_i=\ddc f$.  Define a Hermitian metric $h'_1$
by
\[
 |\xi|_{h'_1}^2=\exp(-f/m_1)|\xi|_{h_1}^2
 \qquad (\xi\in\mathcal O_X(D_1)).
\]
Its curvature is $\Theta_1+m_1^{-1}\ddc f$.
Consequently we may arrange the exact smooth-form identities
\begin{equation}\label{eq:metric-normalization-general}
 \omega=\sum_i m_i\Theta_i,
 \qquad \varphi_0=\sum_i m_i\ell_i+C_0,
\end{equation}
for some $C_0\in\mathbb R$.  Indeed, after the first normalization,
the current identity \eqref{eq:pure-initial} gives
$\ddc(\varphi_0-\sum_i m_i\ell_i)=0$; the difference is pluriharmonic on
the compact connected surface $X$ and hence constant.  The first equality
in \eqref{eq:metric-normalization-general} is an identity of smooth forms,
while the second is an identity of $\omega$-psh, hence quasi-psh,
functions; neither is merely a cohomological identity.
By rescaling each $h_i$ by a positive constant and adjusting $C_0$, we
also arrange $\ell_i\le0$ on $X$.  These constant rescalings do not change
the curvature forms.

The reduced class $\beta$ is big.  Indeed, for $M_0=\max_i m_i$,
\[
 M_0\beta=\{\omega\}+\sum_i(M_0-m_i)\{D_i\}
\]
is the sum of a K\"ahler class and a pseudoeffective class.  Put
$\Theta_D=\sum_i\Theta_i$.  To transfer the weighted variational result
invoked in \cite[Theorem~3.4]{DGL26geometry}, use the source form
$2\Theta_D$ and source volume form $4dV_X$.  Since
$\ddc_{\rm DGL}=2\ddc$, the left side of its surface equation is four
times the left side below, and division by four gives precisely the stated
equation.  Since
\[
 2\Theta_D+\ddc_{\rm DGL}\rho
 =2(\Theta_D+\ddc\rho),
\]
full mass and $\mathcal E^1$ membership pass through the same scaling.  The
theorem produces the unique
$\rho\in\E^1(X,\Theta_D)$ satisfying
\begin{equation}\label{eq:rho-general}
 \left\langle(\Theta_D+\ddc\rho)^2\right\rangle
 =\frac{\exp(\rho)}{\prod_i|s_i|_{h_i}^2}\,dV_X.
\end{equation}
Since $\E^1(X,\Theta_D)\subset\E(X,\Theta_D)$, the current
$T_\rho:=\Theta_D+\ddc\rho$ is positive and has full mass in $\beta$.

For every prime component $G$ of $D+N_{\rm red}$, let $s_G$ be the
canonical defining section of $\mathcal O_X(G)$.  If $G=D_i$, use
$h_G:=h_i$; otherwise choose and rescale a smooth Hermitian metric $h_G$
so that $\ell_G:=\log|s_G|_{h_G}^2\le0$ on $X$.  Let $\Theta_G$ be its
curvature form.
Writing $N=\sum_G\eta_GG$ over its prime components, set
$\ell_N=\sum_G\eta_G\ell_G$ and
$\Theta_N=\sum_G\eta_G\Theta_G$.  Boucksom's domination gives
$T_\rho\ge[N]$.  Hence, with $\widehat\rho:=\rho-\ell_N$,
\begin{equation}\label{eq:rho-general-split}
 \begin{aligned}
  \rho&=\ell_N+\widehat\rho,\\
  R_P&:=T_\rho-[N]=\Theta_D-\Theta_N+\ddc\widehat\rho\ge0,
  \qquad \{R_P\}=P,\\
  \nu_x(\widehat\rho)&=0\qquad\text{for every }x\in X.
 \end{aligned}
\end{equation}
Here the last assertion and the fact that $R_P$ has full mass in $P$ follow
from \cref{lem:fixed-part}.

Choose a smooth function $f_P$ such that
$\theta_P=\Theta_D-\Theta_N+\ddc f_P$ and put
$u:=\widehat\rho-f_P$.  Then $\theta_P+\ddc u=R_P$.  Let
$\mathcal G_{DN}$ denote the finite set of prime components of
$D+N_{\rm red}$, and define the real divisor
\[
 \Delta_{\rm BG}:=\sum_{G\in\mathcal G_{DN}}(d_G-\eta_G)G,
\]
with zero-coefficient components omitted, and let
\[
 |\Delta_{\rm BG}|_{\rm red}
 :=\sum_{\substack{G\in\mathcal G_{DN}\\d_G-\eta_G\ne0}}G
\]
be its reduced support divisor.  Removing the fixed divisor from
\eqref{eq:rho-general} gives
\begin{equation}\label{eq:BG-general-equation}
 \left\langle(\theta_P+\ddc u)^2\right\rangle
 =\frac{\exp(u)}
 {\displaystyle\prod_{G\in\mathcal G_{DN}}
 |s_G|_{h_G}^{2(d_G-\eta_G)}}\,dV_P,
 \qquad dV_P=\exp(f_P)dV_X.
\end{equation}
The form $dV_P$ is a smooth positive volume form.  We now match the
hypotheses of \cite[Theorem~4.6]{BG14}:
\begingroup
\scriptsize
\setlength{\tabcolsep}{3pt}
\renewcommand{\arraystretch}{0.94}
\begin{center}
\begin{tabularx}{\textwidth}{>{\raggedright\arraybackslash}p{.26\textwidth}
  >{\raggedright\arraybackslash}p{.30\textwidth}X}
\toprule
BG hypothesis & Present object & Verification \\
\midrule
compact K\"ahler manifold & the surface $X$ & assumption of the theorem \\
divisor coefficients in $(-\infty,1]$ & $d_G-\eta_G$ in $\Delta_{\rm BG}$ &
  $d_G\in\{0,1\}$ and $\eta_G\ge0$ \\
reduced weighted support plus auxiliary divisor is SNC &
  $|\Delta_{\rm BG}|_{\rm red}+(E_{\rm aux})_{\rm red}$ &
  its support is contained in the SNC divisor in \textup{(Z2)--(Z3)} \\
smooth semipositive big form & $\theta_P$ &
  \textup{(Z1)} and $\int_X\theta_P^2>0$ \\
$P-\{E\}$ is K\"ahler & $E=E_{\rm aux}$ & \textup{(Z3)} \\
full-mass solution & $u\in\E(X,\theta_P)$ &
  $\theta_P+\ddc u=R_P$ and \cref{lem:fixed-part} \\
smooth positive measure & $dV_P=\exp(f_P)dV_X$ & definition above \\
\bottomrule
\end{tabularx}
\end{center}
\endgroup
The coefficients $d_G-\eta_G$ may be negative, and coefficient one is
allowed in the cited theorem.  The cited theorem therefore implies that
$u$ is smooth away
from $\Supp\Delta_{\rm BG}\cup\Supp E_{\rm aux}$.  Since
$\Supp E_{\rm aux}\subset\Supp D\cup\Supp N$ and $f_P$ is smooth, we obtain
\begin{equation}\label{eq:rho-general-smooth}
 \widehat\rho\in C^\infty\bigl(X\setminus(\Supp D\cup\Supp N)\bigr).
\end{equation}

Fix $0<\tau_*<\tau_0$.  For $t>0$, define
\begin{equation}\label{eq:general-candidate}
 \Psi_t=\sum_i(m_i-t)\ell_i+t\rho+2(t\log t-t)+C_0,
\end{equation}
and set $\Psi_0:=\varphi_0$.  We use the continuous extension
$t\log t|_{t=0}=0$.
Equations \eqref{eq:metric-normalization-general} and
\eqref{eq:rho-general-split} give
\begin{equation}\label{eq:general-candidate-current}
 \omega+\ddc\Psi_t
 =\sum_i(m_i-t)[D_i]+t[N]+tR_P\ge0.
\end{equation}
On $\Omega=X\setminus(\Supp D\cup\Supp N)$,
the current $R_P$ is smooth and semipositive by
\eqref{eq:rho-general-smooth}.  Equation \eqref{eq:rho-general} gives a
strictly positive smooth density for $R_P^2$ there, so $R_P$ is positive
definite on $\Omega$.  Moreover,
$\omega+\ddc\Psi_t=tR_P$ on $\Omega$ and
\[
 \dot\Psi_t=-\sum_i\ell_i+\rho+2\log t.
\]
Consequently the following is an equality of smooth positive volume forms:
\begin{equation}\label{eq:general-candidate-flow}
 (\omega+\ddc\Psi_t)^2
 =t^2\frac{\exp(\rho)}{\prod_i|s_i|_{h_i}^2}dV_X
 =\exp(\dot\Psi_t)dV_X.
\end{equation}
For $G\in\mathcal G_{DN}$ define
\begin{equation}\label{eq:candidate-divisor-coefficient}
 a_G(t):=m_G-td_G+t\eta_G.
\end{equation}
If $G=D_i$, this coefficient is at least $m_i-\tau_*>0$; if
$G$ is not a component of $D$, it equals $t\eta_G$ and is at
least $\delta\eta_G>0$ on $[\delta,\tau_*]$.  The auxiliary support
condition introduces no further boundary.  Using the previously chosen
metrics, we can write
\[
 \Psi_t=\sum_{G\in\mathcal G_{DN}}a_G(t)\ell_G+u_t,
 \qquad
 u_t=t\widehat\rho+2(t\log t-t)+C_0.
\]
The preceding estimates give
$\min_{G\in\mathcal G_{DN},\,t\in[\delta,\tau_*]}a_G(t)>0$.  Since a quasi-psh function on
the compact manifold $X$ is bounded above, $\widehat\rho$ is bounded
above; hence
$\sup_{[\delta,\tau_*]\times X}u_t<+\infty$.  Finally,
\eqref{eq:rho-general-split} and local integrability of quasi-psh
functions give the explicit estimate
\[
 \|\Psi_t-\varphi_0\|_{L^1(X)}
 \le t\!\left(\|\rho\|_{L^1(X)}+
       \sum_i\|\ell_i\|_{L^1(X)}\right)+O(|t\log t|),
\]
after fixing any smooth volume form for the $L^1$ norm.  Hence
$\Psi_t\to\varphi_0$ in $L^1(X)$ as $t\downarrow0$.
Thus all hypotheses of \cref{lem:finite-comparison} hold, and that lemma
gives
\begin{equation}\label{eq:general-candidate-order}
 \Psi_t\le\varphi_t\qquad(0<t\le\tau_*).
\end{equation}

By \cref{lem:real-persistence},
$\omega+\ddc\varphi_t\ge\sum_i(m_i-t)[D_i]$.  Define the residual current
\[
 Q_t:=(\omega+\ddc\varphi_t)-\sum_i(m_i-t)[D_i]\ge0.
\]
Using \eqref{eq:pure-initial}, its cohomology class is
$\{Q_t\}=t\sum_i\{D_i\}=t\beta$.  Homogeneity of minimal multiplicities
gives $N(t\beta)=tN(\beta)=tN$, so \eqref{eq:Bou-domination} yields
\begin{equation}\label{eq:general-lower-current}
 \omega+\ddc\varphi_t
 \ge\sum_i(m_i-t)[D_i]+t[N].
\end{equation}
At an SNC point, \eqref{eq:general-lower-current} and the last assertion of
\cref{lem:SNC} give
$\arn(\varphi_t,x)\ge
\max\bigl(\{a_G(t):x\in G\}\cup\{0\}\bigr)$.  On the other hand,
\eqref{eq:rho-general-split} gives $\nu_x(\widehat\rho)=0$, so the full
Skoda--H\"older part of \cref{lem:SNC} shows that this maximum equals
$\arn(\Psi_t,x)$.  Finally,
$\Psi_t\le\varphi_t$ implies
$\lct(\Psi_t,x)\le\lct(\varphi_t,x)$ and therefore
$\arn(\varphi_t,x)\le\arn(\Psi_t,x)$.  The two bounds agree.  Since
$\tau_*<\tau_0$ is arbitrary, the theorem follows.
\end{proof}

\begin{remark}[Why the present argument does not cover nonhomothetic families]
\label{rem:nonhomothetic-obstruction}
For an affine family of residual classes
$\alpha_t=\alpha_0+t\gamma$ with
$\alpha_0,\gamma\in H^{1,1}(X,\mathbb R)$ and $\alpha_0\ne0$, the classes
need not be homothetic.  Even if each has a good Zariski decomposition
$P_t+\{N_t\}$ and a regular elliptic full-mass potential $u_t$,
substitution into a parabolic candidate creates the term
$\partial_tu_t$.  This term is not controlled by the fixed-time elliptic
equation, so the present argument does not extend without additional
parabolic input.
\end{remark}

\section{Hirzebruch surfaces: counterexamples and exact profiles}
\label{sec:Hirzebruch}

Fix an integer $e\ge2$.  With the quotient convention used in the
Introduction, let
\[
 \pi_e:X=\mathbb F_e=\PP_{\PP^1}
 (\mathcal O_{\PP^1}\oplus\mathcal O_{\PP^1}(-e))\longrightarrow\PP^1.
\]
Let $S\subset X$ be the section induced by the quotient onto
$\mathcal O_{\PP^1}(-e)$.  For $p\in\PP^1$, let $F_p=\pi_e^{-1}(p)$ be
the fibre divisor and put
$\mathfrak f:=\{F_p\}\in H^{1,1}(X,\mathbb R)$.  Then
\begin{equation}\label{eq:Hirz-intersections}
 \{S\}^2=-e,\qquad \{S\}\cdot\mathfrak f=1,
 \qquad\mathfrak f^2=0.
\end{equation}
To verify the cones, write the class of an irreducible curve $C$ as
$x\{S\}+y\mathfrak f$.  If $C\ne S$, then
$x=\{C\}\cdot\mathfrak f\ge0$ and
$y-ex=\{C\}\cdot\{S\}\ge0$; the curve $S$ itself supplies the remaining extremal
ray.  Hence the closed effective cone is generated by $\{S\}$ and
$\mathfrak f$.  Duality on a surface shows that the nef cone is generated
by $\mathfrak f$ and $\{S\}+e\mathfrak f$, and the Nakai--Moishezon
criterion gives
\begin{equation}\label{eq:Hirz-Kahler-cone}
 x\{S\}+y\mathfrak f\text{ is K\"ahler}
 \quad\Longleftrightarrow\quad x>0\text{ and }y>ex.
\end{equation}
See also \cite[Chapter~V, Propositions~2.3 and~2.20]{Hartshorne}.

Fix one fibre $F$ and set
\[
 L:=\mathcal O_X(S+eF)
 =\mathcal O_X(1)\otimes\pi_e^*\mathcal O_{\PP^1}(e).
\]
Then $\pi_{e*}L\simeq
\mathcal O_{\PP^1}(e)\oplus\mathcal O_{\PP^1}$, and the tautological
evaluation map $\pi_e^*\pi_{e*}L\twoheadrightarrow L$ is surjective.
Thus $L$ is globally generated
\cite[Chapter~V, Exercise~2.11]{Hartshorne}.
Thus the linear system $|S+eF|$ is base-point-free and its associated
morphism contracts $S$.  Pulling back a suitably normalized
Fubini--Study form gives a smooth semipositive representative of the class
$\{S\}+e\mathfrak f$, whose square is $e>0$.

For $r\in\mathbb R$, put $r_+:=\max\{r,0\}$.  For an integer $k\ge1$,
define the class
$\beta_k:=\{S\}+k\mathfrak f\in H^{1,1}(X,\mathbb R)$.  Its divisorial
Zariski decomposition is $\beta_k=P_k+\{N_k\}$, where $P_k$ is the nef
positive-part class and $N_k$ is the effective real negative divisor given
by
\begin{equation}\label{eq:Hirz-ZD-all-k}
 \begin{split}
 P_k&=\begin{cases}
 \dfrac{k}{e}(\{S\}+e\mathfrak f),&1\le k\le e,\\
 \{S\}+k\mathfrak f,&k>e,
 \end{cases}\\
 N_k&=\left(1-\frac{k}{e}\right)_+S.
 \end{split}
\end{equation}
For $k<e$, the displayed $P_k$ is nef, $N_k\ge0$, $P_k\cdot\{S\}=0$,
$P_k^2=k^2/e>0$, and the
intersection matrix of $N_k$ is $(-e)$; these facts characterize the
Zariski decomposition.  For $k=e$ the negative part is zero and $P_k$ has
the smooth semipositive big representative just constructed.  For $k>e$,
\eqref{eq:Hirz-Kahler-cone} shows that $P_k=\beta_k$ is K\"ahler.  If
$k\le e$ and $0<\varepsilon<k/e$, then
\begin{equation}\label{eq:Hirz-auxiliary}
 P_k-\varepsilon\{S\}
 =\left(\frac{k}{e}-\varepsilon\right)\{S\}
  +k\mathfrak f
\end{equation}
is K\"ahler because its second coefficient exceeds $e$ times its first by
$e\varepsilon>0$.  Thus one can take the effective real divisor
$E_{\mathrm{aux}}=\varepsilon S$;
for $k>e$, take $E_{\mathrm{aux}}=0$.  Distinct fibres are disjoint and
meet $S$ transversely, so the required supports are SNC.

\begin{proof}[Proof of Theorem~\ref{thm:robust-counterexample}]
Since $b>ea$ and $e\ge2$, one has $b>a$.  For $0<t<a$,
\cref{lem:real-persistence} gives
\[
 \omega+\ddc\varphi_t\ge(a-t)[S]+(b-t)[F_0].
\]
Define
\[
 Q_t:=(\omega+\ddc\varphi_t)-(a-t)[S]-(b-t)[F_0].
\]
This is a closed positive current and, by \eqref{eq:robust-initial}, has
class $t(\{S\}+\mathfrak f)$.  The case $k=1$ of
\eqref{eq:Hirz-ZD-all-k} and homogeneity of minimal multiplicities give
\[
 N\bigl(t(\{S\}+\mathfrak f)\bigr)
 =t\left(1-\frac1e\right)S.
\]
Boucksom's domination \eqref{eq:Bou-domination} therefore yields
\[
 \omega+\ddc\varphi_t
 \ge\left(a-\frac te\right)[S]+(b-t)[F_0].
\]
At a point of $S\setminus F_0$, the current-to-Arnold implication in
\cref{lem:SNC} proves the first inequality in
\eqref{eq:robust-lower-bound}.  The same lemma gives
$\arn(\varphi_0,x)=a$, and $e>1$ gives the strict inequality.  No step of
this argument invokes the weighted elliptic potential or
\cref{lem:finite-comparison}.
\end{proof}

\begin{proof}[Proof of Theorem~\ref{thm:Hirz-exact}]
The condition $B>ea$ is exactly the K\"ahler-cone condition
\eqref{eq:Hirz-Kahler-cone} for the class in \eqref{eq:Hirz-class}.  The
reduced initial divisor is
$D=S+\sum_{i=1}^kF_i$, its components have respective weights
$m_S=a$ and $m_{F_i}=b_i$, and
$\beta=\{S\}+\sum_i\{F_i\}=\beta_k$.  We verify each part of the
adapted package in \cref{def:adapted-package}.  For $k\le e$,
\eqref{eq:Hirz-ZD-all-k} supplies a nef positive part with a smooth
semipositive representative and positive square $k^2/e$; for $k>e$, that
positive part is K\"ahler.  The divisor $D+N_{k,\rm red}$ has SNC support because
the fibres are pairwise disjoint and meet $S$ transversely.  Finally,
\eqref{eq:Hirz-auxiliary} supplies $E_{\rm aux}=\varepsilon S$ when
$k\le e$, while $E_{\rm aux}=0$ works when $k>e$; in both cases its support
is contained in $\Supp D\cup\Supp N_k$.  Thus \textup{(Z1)--(Z3)} all hold.

Applying \cref{thm:surface-exactness}, the coefficient assigned to $S$ by
the Arnold-multiplicity formula is
\[
 (a-t)+t\left(1-\frac{k}{e}\right)_+
 =a-\kappa_{e,k}t,
\]
because the Zariski coefficient of $S$ is
$\eta_S=(1-k/e)_+$; every fibre has Zariski coefficient zero and hence
coefficient $b_i-t$.  The SNC maximum formula in \cref{lem:SNC} now gives
all four cases in \eqref{eq:Hirz-exact-formula}, including the maximum at
$S\cap F_i$ and zero off the divisor.  The comparison interval is precisely
$0<t<\min\{a,b_1,\ldots,b_k\}$; neither $b_i>a$ nor integrality is required.
\end{proof}

\begin{proof}[Proof of Corollary~\ref{cor:exact-counterexample}]
For $k=1$, $a=1$, and $b_1=e+1$, one has
\[
 B=e+1>e=ea,\qquad
 \tau=\min\{1,e+1\}=1,\qquad
 \kappa_{e,1}=\frac{1}{e}.
\]
Let $U\subset\mathbb F_e$ be a sufficiently small coordinate
neighbourhood.  Choose holomorphic defining functions $f_S$ and $f_1$
on $U$ for $S$ and $F_1$, respectively, and a smooth function
$\rho$ on $U$ such that $\omega=\ddc\rho$.  The initial current
identity and the local Poincar\'e--Lelong formula give
\[
 \ddc\!\left(
 \varphi_0+\rho-\log|f_S|^2-(e+1)\log|f_1|^2
 \right)=0
 \quad\text{on }U
\]
in the sense of currents.  The expression is pluriharmonic in the sense
of distributions and hence smooth.  Thus, for some $h\in C^\infty(U)$,
\[
 \varphi_0
 =\log|f_S|^2+(e+1)\log|f_1|^2+h
 =\log\bigl|f_Sf_1^{\,e+1}\bigr|^2+h.
\]
Hence $\varphi_0$ has analytic singularities in the sense of
\eqref{eq:analytic-singularities}.

If $x\in S\setminus F_1$, this local expression and
\cref{lem:SNC} give $\arn(\varphi_0,x)=1$.  Theorem~\ref{thm:Hirz-exact}
gives
\[
 \arn(\varphi_t,x)=1-\frac{t}{e}\qquad(0<t<1).
\]
Since $e\ge2$, this is greater than
$1-t=\max\{\arn(\varphi_0,x)-t,0\}$, proving
\eqref{eq:Q110-counterexample}.
\end{proof}

\section{Nonlocality, multiplier ideals, and quantitative consequences}
\label{sec:applications}

Changing fibres away from a point of the negative section changes the
residual Zariski class without changing the current germ there.  The next
theorem produces arbitrarily many such flows while keeping all ambient
data fixed.

\begin{proof}[Proof of Theorem~\ref{thm:parametric-nonlocality}]
Fix $M\ge2$ and $a\ge1$.  Choose an integer $e\ge M+2$ and a positive
integer $C$, divisible by $\operatorname{lcm}(1,\ldots,M)$, such that
\begin{equation}\label{eq:nonlocal-choices}
 C>Ma,\qquad 2C>ea.
\end{equation}
Choose distinct fibres $F_0,F_1,\ldots,F_M$ and a point
$x\in S\setminus\bigcup_{\ell=0}^MF_\ell$.  For $1\le j\le M$, define
\begin{equation}\label{eq:nonlocal-currents}
 T_0^{(j)}
 :=a[S]+C[F_0]+\frac{C}{j}\sum_{\ell=1}^j[F_\ell].
\end{equation}
Every $T_0^{(j)}$ represents $a\{S\}+2C\mathfrak f$, which is K\"ahler by
\eqref{eq:nonlocal-choices}.  Fix a K\"ahler form $\omega$ in this class
and a single smooth positive volume form $dV_X$.  The
$\partial\bar\partial$-lemma gives
$\varphi_0^{(j)}\in\PSH(X,\omega)$ satisfying
\[
 \omega+\ddc\varphi_0^{(j)}=T_0^{(j)}.
\]
The numbers $a$, $C$, and $C/j$ are positive integers.  The local
Poincar\'e--Lelong formula therefore writes $\varphi_0^{(j)}$ as
\[
 \log\left|
 f_S^{\,a}f_0^{\,C}\prod_{\ell=1}^{j}f_\ell^{\,C/j}
 \right|^2+h_j,
\]
where the $f$'s are local defining functions and $h_j$ is smooth.  Hence
each $\varphi_0^{(j)}$ has analytic singularities.

On a neighbourhood of $x$ avoiding all selected fibres, every current is
exactly $a[S]$, proving \eqref{eq:intro-common-germ}.  The corresponding
potential germs differ by a smooth pluriharmonic function and therefore
have the same singularity type.  Since the divisors are SNC,
\cref{lem:SNC} gives
\[
 \arn(\varphi_0^{(j)})=\max\left\{a,C,\frac Cj\right\}=C,
\]
which proves \eqref{eq:intro-common-global-arnold}.

For the $j$-th flow, the initial divisor has $j+1$ fibre components and
total fibre coefficient $2C$.  Moreover,
$\min\{a,C,C/j\}=a$ and $j+1<e$.  Hence
\cref{thm:Hirz-exact} gives
\begin{equation}\label{eq:nonlocal-slopes}
 \arn(\varphi_t^{(j)},x)=a-\frac{j+1}{e}t,
 \qquad 0<t<a.
\end{equation}
The rates $(j+1)/e$, $1\le j\le M$, are pairwise distinct.
\end{proof}

\begin{corollary}
\label{cor:no-dimension-rate}
There is no constant $c_2>0$ with the following property: for every
maximal weak solution on a compact K\"ahler surface and every point $x$,
there exists $\varepsilon>0$ such that
\begin{equation}\label{eq:forbidden-rate}
 \arn(\varphi_t,x)\le
 \max\{\arn(\varphi_0,x)-c_2t,0\}
\end{equation}
for $0<t<\varepsilon$.
\end{corollary}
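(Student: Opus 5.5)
The plan is to argue by contradiction, using the Hirzebruch counterexamples with small slope. Suppose such a constant $c_2>0$ exists. Choose integers $e\ge2$ with $1/e<c_2$ (say $e>\max\{2,1/c_2\}$), and take $k=1$, $a=1$, $b_1=e+1$ as in \cref{cor:exact-counterexample}. That corollary gives a maximal weak solution $(\varphi_t)$ on the compact K\"ahler surface $\mathbb F_e$, with initial potential having analytic singularities, and a point $x\in S\setminus F_1$. At this point $\arn(\varphi_0,x)=1$ and
\[
 \arn(\varphi_t,x)=1-\frac te\qquad(0<t<1).
\]

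Fix the $\varepsilon>0$ supplied by the hypothetical property for this flow and this point. For $0<t<\min\{\varepsilon,1/c_2,1\}$ the right-hand side of \eqref{eq:forbidden-rate} equals $1-c_2t>0$. The inequality would then give $1-t/e\le 1-c_2t$, that is, $c_2\le 1/e$, which contradicts the choice of $e$. Since the counterexample is available for every $e\ge2$, no positive constant $c_2$ can work, and the corollary follows.

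Two points need care, though neither is a real obstacle. First, the exact equality from \cref{thm:Hirz-exact} is not actually needed. The lower bound $\arn(\varphi_t,x)\ge 1-t/e$ from \cref{thm:robust-counterexample} already suffices, so the contradiction does not depend on the comparison lemma. Second, $\varepsilon$ is allowed to depend on the solution and the point. This causes no trouble, because the contradiction holds at every small $t$. An alternative construction would take $k=1$ with $e$ large in \cref{thm:parametric-nonlocality}; it produces the same slope $1/e\to0$.
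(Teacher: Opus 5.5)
Your argument is correct and is essentially the paper's proof. Both choose $e\ge2$ with $e>1/c_2$, take the $a=1$ one-fibre Hirzebruch example, and use only the current lower bound $\arn(\varphi_t,x)\ge1-t/e$ from \cref{thm:robust-counterexample} for $0<t<\min\{1,1/c_2\}$ (you also intersect with $(0,\varepsilon)$), where the right-hand side of \eqref{eq:forbidden-rate} equals $1-c_2t$. One small slip in your closing aside: \cref{thm:parametric-nonlocality} has no $k=1$ case, since its $j=1$ member already has two fibres, so its smallest slope is $2/e$ rather than $1/e$; this still tends to zero, so the aside survives.
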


\begin{proof}
Choose an integer $e\ge2$ with $e>1/c_2$ and use the integral example in
\cref{thm:robust-counterexample}.  Its current lower bound gives
$\arn(\varphi_t,x)\ge1-t/e>1-c_2t$ for every
$0<t<\min\{1,1/c_2\}$, on which the right-hand side of
\eqref{eq:forbidden-rate} equals $1-c_2t$.
This contradiction does not use the reverse comparison behind the exact
Arnold formula.
\end{proof}

We now compute the multiplier ideals, which contain more information than
the local Arnold multiplicity alone.  For a quasi-psh function $u$ on $X$,
a number $q>0$, and a point
$x\in X$, let $\mathcal O_{X,x}$ be the local ring of holomorphic germs.
Define the multiplier ideal in the exponential normalization used here by
\begin{equation}\label{eq:multiplier-definition}
 \J(q u)_x=
 \{f\in\mathcal O_{X,x}:|f|^2\exp(-qu)\in L^1_{\mathrm{loc}}\}.
\end{equation}

\begin{theorem}
\label{thm:multiplier-ideals}
Under the hypotheses of \cref{thm:Hirz-exact}, put
\begin{equation}\label{eq:dynamic-divisor-coefficients}
 c_S(t)=a-\kappa_{e,k}t,
 \qquad c_i(t)=b_i-t.
\end{equation}
For every $q>0$ and $0<t<\tau$,
\begin{equation}\label{eq:multiplier-formula}
 \J(q\varphi_t)=
 \mathcal O_X\!\left(
  -\lfloor q c_S(t)\rfloor S
  -\sum_{i=1}^k\lfloor q c_i(t)\rfloor F_i
 \right).
\end{equation}
For fixed $q>0$, the multiplier ideal changes precisely at the times in
$W_q$, where
\begin{equation}\label{eq:jumping-walls}
 \begin{split}
 W_q:={}&\left\{\frac{a-m/q}{\kappa_{e,k}}:
 m\in\mathbb Z,\ 0<\frac{a-m/q}{\kappa_{e,k}}<\tau\right\}\\
 &\quad\cup\bigcup_{i=1}^k
 \left\{b_i-\frac mq:m\in\mathbb Z,\ 0<b_i-\frac mq<\tau\right\}.
 \end{split}
\end{equation}
At each listed time, the change occurs in the stalk at a generic point of
the associated divisor.
\end{theorem}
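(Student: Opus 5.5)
The multiplier ideal is computed from the two-sided structure already obtained in the proof of \cref{thm:surface-exactness}, and then the jumps are read off from the floor functions.

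\textbf{Step 1: sandwich $\varphi_t$ between two explicit potentials.} For $0<t<\tau$ we have $\Psi_t\le\varphi_t$ by \eqref{eq:general-candidate-order}. In the Hirzebruch setting,
\[
 \Psi_t=c_S(t)\ell_S+\sum_i c_i(t)\ell_{F_i}+u_t,
\]
because the coefficient of $S$ is $(a-t)+t(1-k/e)_+=c_S(t)$. Here $u_t=t\widehat\rho+O(1)$ satisfies $\nu_x(u_t)=0$ everywhere by \cref{lem:fixed-part}. From the other side, the current bound \eqref{eq:general-lower-current} gives
\[
 \omega+\ddc\varphi_t\ge c_S(t)[S]+\sum_ic_i(t)[F_i].
\]
Hence locally $\varphi_t=c_S(t)\log|z_S|^2+\sum_ic_i(t)\log|z_{F_i}|^2+w$, with $w$ quasi-psh and therefore bounded above.

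\textbf{Step 2: the inclusion $\J(q\varphi_t)\subset\mathcal O_X(-\lfloor qc_S\rfloor S-\sum_i\lfloor qc_i\rfloor F_i)$.} Use the upper bound on $w$. Since $\exp(-q\varphi_t)\ge C\,|z_S|^{-2qc_S}\prod_i|z_{F_i}|^{-2qc_i}$, integrability of $|f|^2\exp(-q\varphi_t)$ forces the following at a generic point of each component, by Tonelli in the transverse variable: $\operatorname{ord}_S f>qc_S-1$, that is $\operatorname{ord}_Sf\ge\lfloor qc_S\rfloor$, and similarly along each $F_i$. Vanishing orders along a smooth divisor are determined generically, so the inclusion holds as ideal sheaves (both sides are divisorial).

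\textbf{Step 3: the reverse inclusion.} Use $\varphi_t\ge\Psi_t$, so $\exp(-q\varphi_t)\le\exp(-q\Psi_t)$. Take $f=z_S^{\alpha}\prod z_{F_i}^{\alpha_i}g$ with $\alpha\ge\lfloor qc_S\rfloor$ and $\alpha_i\ge\lfloor qc_i\rfloor$. The monomial part $|z_S|^{2\alpha-2qc_S}\prod|z_{F_i}|^{2\alpha_i-2qc_i}$ has every exponent $>-2$, so it lies in $L^{p}$ for some $p>1$. Because $\nu(u_t)=0$, Skoda gives $\exp(-qp'u_t)\in L^1_{\rm loc}$, and H\"older as in \cref{lem:SNC} then yields integrability at SNC points. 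This gives \eqref{eq:multiplier-formula}.

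\textbf{Step 4: jumping walls.} The ideal is determined by the integer vector $(\lfloor qc_S(t)\rfloor,\lfloor qc_i(t)\rfloor)$. Since $c_S$ and $c_i$ are strictly decreasing affine functions, $\lfloor qc_S(t)\rfloor$ drops exactly when $qc_S(t)=m\in\mathbb Z$, i.e.\ at $t=(a-m/q)/\kappa_{e,k}$. The analogous statement holds for $b_i-m/q$, which gives exactly $W_q$. At such a time the stalk at a generic point of $S$ (respectively $F_i$) changes: it lies off the other components, and there the ideal is $(z_S^{\lfloor qc_S\rfloor})$.

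The main obstacle is Step 3. It needs the full-mass Lelong vanishing and the comparison $\Psi_t\le\varphi_t$, not just the current lower bound; Steps 2 and 4 are routine.
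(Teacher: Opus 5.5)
Your proposal is correct and follows essentially the same route as the paper. For one inclusion you use the lower current bound $\omega+dd^c\varphi_t\ge c_S(t)[S]+\sum_i c_i(t)[F_i]$ with Tonelli at generic points of each component; for the reverse inclusion you use the comparison $\Psi_t\le\varphi_t$ with the zero-Lelong residual $\widehat\rho$ and Skoda--H\"older; and you read off the walls $W_q$ from the floor functions.

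One wording point in Step 2: only the target ideal needs to be divisorial. The order bounds imply divisibility by $\prod_G s_G^{\lfloor qc_G(t)\rfloor}$ because the local equations of $S$ and the $F_i$ are pairwise non-associate primes in $\mathcal O_{X,x}$. By contrast, $\mathcal J(q\varphi_t)$ itself is known to be divisorial only after Step 3.
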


\begin{proof}
We prove the two inclusions separately.  The lower current inequality in
the proof of
\cref{thm:surface-exactness} specializes to
\begin{equation}\label{eq:Hirz-lower-current-app}
 \omega+\ddc\varphi_t
 \ge c_S(t)[S]+\sum_i c_i(t)[F_i].
\end{equation}
For $G\in\{S,F_1,\ldots,F_k\}$, set
$c_G(t):=c_S(t)$ when $G=S$ and $c_G(t):=c_i(t)$ when $G=F_i$.  We first
prove the required inclusion at an arbitrary stalk.  Fix $x\in X$ and
$f\in\J(q\varphi_t)_x$.  Choose a representative of $f$ on a neighbourhood
$U\ni x$ such that $|f|^2\exp(-q\varphi_t)$ is integrable on $U$.  For each component $G$
through $x$, let $r_G=\operatorname{ord}_G(f)$ and write
$f=s_G^{r_G}g_G$ on a smaller neighbourhood, where
$g_G|_G\not\equiv0$.  Suppose that
$r_G<\lfloor q c_G(t)\rfloor$.  Choose a smooth point
$y\in G\cap U$ away from the other components and from the zero set of
$g_G|_G$.  The current inequality \eqref{eq:Hirz-lower-current-app} gives,
in a coordinate $z_1$ with $G=\{z_1=0\}$,
\[
 \varphi_t=c_G(t)\log|z_1|^2+v+h,
\]
where $v$ is plurisubharmonic and $h$ is smooth.  Both are bounded above
on a smaller polydisc.  Since $g_G$ is bounded away from zero there,
Tonelli's theorem yields a lower bound by a positive constant times
\[
 \int_{|z_1|<\varepsilon}
 |z_1|^{2r_G-2q c_G(t)}\,dA(z_1).
\]
The assumption on $r_G$ implies
$r_G-qc_G(t)\le-1$, so this integral diverges, contradicting the choice
of $f$.  Hence
$r_G\ge\lfloor q c_G(t)\rfloor$ for every component through $x$.
Because the divisor is SNC, its local equations are distinct coordinate
parameters; simultaneous divisibility by their required powers is
equivalent to divisibility by their product.  Thus
\[
 \J(q\varphi_t)_x\subset
 \mathcal O_X\!\left(-\lfloor qc_S(t)\rfloor S
 -\sum_i\lfloor qc_i(t)\rfloor F_i\right)_x.
\]

For the reverse inclusion, let $s_S$ and $s_i$ be local defining functions
of $S$ and $F_i$, respectively.  The candidate $\Psi_t$ constructed in the
proof of \cref{thm:surface-exactness} satisfies
$\Psi_t\le\varphi_t$ and has local form
\[
 c_S(t)\log|s_S|^2+\sum_i c_i(t)\log|s_i|^2
 +t\widehat\rho+O(1),
 \qquad \nu_x(\widehat\rho)=0\quad\forall x.
\]
Here $\widehat\rho$ is the zero-Lelong residual potential in
\eqref{eq:rho-general-split}, specialized to
$D=S+\sum_iF_i$.  If a germ belongs to the displayed divisorial ideal,
factor it as
\[
 f=s_S^{\lfloor qc_S(t)\rfloor}
   \prod_i s_i^{\lfloor qc_i(t)\rfloor}g,
\]
where $g$ is holomorphic; factors corresponding to components absent from
the local chart are omitted.  After cancellation, every remaining
divisor exponent is the fractional part of $qc_G(t)$ and is strictly less
than one.  Choose $p_H>1$ sufficiently close to one that all these
fractional exponents remain below one after multiplication by $p_H$.
Skoda's theorem applied to
$\exp(-qtp_H'\widehat\rho)$, where $p_H'=p_H/(p_H-1)$, and H\"older's
inequality prove local integrability.  Thus the displayed ideal is
contained in $\J(q\Psi_t)$.  Since $\Psi_t\le\varphi_t$,
$\J(q\Psi_t)\subset\J(q\varphi_t)$, proving equality.

For fixed $q$, the ideal changes exactly when one of the affine quantities
$q c_S(t)$ or $q c_i(t)$ crosses an integer.  Solving those equations
gives \eqref{eq:jumping-walls}.  At each listed time the relevant floor
drops by one immediately afterward, so the stalk changes at a generic
point of the corresponding divisor.
\end{proof}

\begin{proposition}
\label{prop:persistence-loci}
Under the hypotheses of \cref{thm:Hirz-exact}, let $\gamma>0$ and
$0<t<\tau$, and let $c_S(t)$ and $c_i(t)$ be as in
\eqref{eq:dynamic-divisor-coefficients}.  Define $S_t^\gamma\subset X$ by
$S_t^\gamma:=\Supp S$ if $c_S(t)\ge\gamma$ and
$S_t^\gamma:=\varnothing$ otherwise.  Then
\begin{equation}\label{eq:persistence-loci}
 \{x\in X:\arn(\varphi_t,x)\ge\gamma\}
 =S_t^\gamma
  \cup\bigcup_{\{i:c_i(t)\ge\gamma\}}\Supp F_i.
\end{equation}
\end{proposition}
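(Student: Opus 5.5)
The plan is to read the superlevel set directly off the exact profile \eqref{eq:Hirz-exact-formula} in \cref{thm:Hirz-exact}, splitting $X$ into the four strata listed there: $S\setminus\bigcup_iF_i$, the sets $F_i\setminus S$, the crossing points $S\cap F_i$, and the complement of $S\cup\bigcup_iF_i$. Since $\gamma>0$ and the profile vanishes off the divisor, no point of the last stratum lies in the left-hand side of \eqref{eq:persistence-loci}. The right-hand side is also contained in $S\cup\bigcup_iF_i$, so it suffices to compare the two sides on the divisor.

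On $S\setminus\bigcup_iF_i$ the multiplicity equals $c_S(t)=a-\kappa_{e,k}t$. Such a point therefore lies in the left side exactly when $c_S(t)\ge\gamma$, which is exactly when it lies in $S_t^\gamma$; it cannot lie in any $\Supp F_i$. On $F_i\setminus S$ the multiplicity is $c_i(t)=b_i-t$. Distinct fibres are disjoint, so such a point belongs to the union over $\{i:c_i(t)\ge\gamma\}$ if and only if $c_i(t)\ge\gamma$, and it is not in $S_t^\gamma$.

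At $x=S\cap F_i$ the profile gives $\max\{c_S(t),c_i(t)\}$. This maximum is $\ge\gamma$ if and only if $c_S(t)\ge\gamma$ or $c_i(t)\ge\gamma$. That is exactly the condition for $x$ to lie in $S_t^\gamma$ or in $\Supp F_i$ with $i$ in the index set, because $x$ lies on no other fibre $F_j$.

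The only point needing any care is the crossing case, where the equality depends on the maximum in \cref{lem:SNC} rather than on a sum of coefficients. Here I simply invoke \cref{thm:Hirz-exact} as stated, with no further analysis.
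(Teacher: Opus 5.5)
Your proposal is correct and matches the paper's proof: both read the superlevel set directly off the pointwise profile \eqref{eq:Hirz-exact-formula}, and both use the fact that at a crossing $S\cap F_i$ the multiplicity is the maximum of $c_S(t)$ and $c_i(t)$, so no extra intersection stratum appears. Your stratum-by-stratum check spells out what the paper states in one line.
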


\begin{proof}
This is the pointwise formula \eqref{eq:Hirz-exact-formula}.  At an SNC
intersection the Arnold multiplicity is the maximum, so no additional
intersection stratum appears.
\end{proof}

\begin{proposition}
\label{prop:upper-optimality}
For every integer $e\ge2$, consider the one-fibre flow of
\cref{cor:exact-counterexample}, with $a=1$, $b_1=e+1$, and
$x\in S\setminus F_1$.  Set $U_e(t):=1-t/(e+1)$.  Then
\begin{equation}\label{eq:upper-optimality}
 \arn(\varphi_t,x)=1-\frac te
 \le U_e(t)\qquad(0<t<1),
\end{equation}
where $U_e$ is the upper estimate in \eqref{eq:DGL-two-sided}.  Moreover,
\begin{equation}\label{eq:upper-optimality-ratio}
 U_e(t)-\arn(\varphi_t,x)=\frac{t}{e(e+1)},\qquad
 \frac{1-\arn(\varphi_t,x)}{1-U_e(t)}
 =\frac{e+1}{e}\longrightarrow1.
\end{equation}
\end{proposition}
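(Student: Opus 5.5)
The plan is to specialize \cref{cor:exact-counterexample} and then do elementary algebra.

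First, the identity $\arn(\varphi_t,x)=1-t/e$ for $x\in S\setminus F_1$ and $0<t<1$ is exactly \eqref{eq:Q110-counterexample}. It is obtained from \cref{thm:Hirz-exact} with $k=1$, $a=1$, $b_1=e+1$, so $\kappa_{e,1}=1/e$ and $\tau=1$.

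Next, I identify $U_e$ with the upper estimate in \eqref{eq:DGL-two-sided}. This needs two inputs.
\begin{itemize}
\item The local value $\arn(\varphi_0,x)=1$ comes from the local expression $\varphi_0=\log|f_Sf_1^{e+1}|^2+h$ and \cref{lem:SNC}, since $x\notin F_1$.
\item The global value $\arn(\varphi_0)=e+1$ comes from the SNC maximum formula in \cref{lem:SNC}: the Arnold multiplicity is $1$ on $S\setminus F_1$, $e+1$ on $F_1$ (including the point $S\cap F_1$, where the maximum is $\max\{1,e+1\}$), and $0$ elsewhere.
\end{itemize}
Hence the right side of \eqref{eq:DGL-two-sided} equals $\max\{1\cdot(1-t/(e+1)),0\}=1-t/(e+1)=U_e(t)$ for $0<t<1$. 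The positive part can be dropped because $t<1<e+1$.

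The inequality $1-t/e\le U_e(t)$ then reduces to $t/(e+1)\le t/e$, which holds since $e+1>e$ and $t>0$. The difference is
\[
U_e(t)-\arn(\varphi_t,x)=\frac te-\frac t{e+1}=\frac{t}{e(e+1)}.
\]
For the ratio, $1-\arn(\varphi_t,x)=t/e$ and $1-U_e(t)=t/(e+1)$, so the quotient is $(e+1)/e$. This is independent of $t$ and tends to $1$ as $e\to\infty$.

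There is no real obstacle here. The only point requiring care is the correct identification of the global Arnold multiplicity $\arn(\varphi_0)$, in particular checking that the point $S\cap F_1$ does not produce anything larger than $e+1$. This is handled by the SNC maximum formula in \cref{lem:SNC}.
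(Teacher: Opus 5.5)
Your proposal is correct and follows essentially the same route as the paper. You read off $\arn(\varphi_t,x)=1-t/e$ from \cref{thm:Hirz-exact} with $k=1$, compute $\arn(\varphi_0,x)=1$ and $\arn(\varphi_0)=e+1$ via \cref{lem:SNC}, substitute these into \eqref{eq:DGL-two-sided}, and finish with elementary algebra; your explicit check at $S\cap F_1$ is a detail the paper leaves implicit.
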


\begin{proof}
The local initial Arnold multiplicity is one and the global initial
Arnold multiplicity is $e+1$.  Substitute these values into
\eqref{eq:DGL-two-sided} and use \cref{thm:Hirz-exact}.
\end{proof}

\section{Product stability and higher-dimensional profiles}
\label{sec:products}

The product statement is proved at the level of every smooth
approximation, not inferred from a formal pullback of a weak equation.

\begin{theorem}
\label{thm:product}
Let $r_P\ge1$ be an integer.  For $1\le i\le r_P$, let
$(X_i,\omega_i)$ be a compact K\"ahler manifold
of complex dimension $n_i\ge1$, let $dV_i$ be a smooth positive volume
form, and let $\varphi_{i,t}$ be the maximal weak solution from
$\varphi_{i,0}\in\PSH(X_i,\omega_i)$.  Assume that all factor flows are
defined on a common interval $0\le t<T$.  Put
\begin{equation}\label{eq:product-data}
 X=\prod_{i=1}^{r_P}X_i,\qquad
 n=\sum_{i=1}^{r_P} n_i.
\end{equation}
For each $i$, let $\pr_i:X\to X_i$ be the $i$-th projection, and define
the K\"ahler form
$\omega:=\sum_{i=1}^{r_P}\pr_i^*\omega_i$ on $X$, and normalize the
product volume form by
\begin{equation}\label{eq:product-volume}
 dV=\frac{n!}{\prod_{i=1}^{r_P} n_i!}
 \bigwedge_{i=1}^{r_P}\pr_i^*dV_i.
\end{equation}
Then, for $0\le t<T$,
\begin{equation}\label{eq:product-potential}
 \Phi_t=\sum_{i=1}^{r_P}\pr_i^*\varphi_{i,t}
\end{equation}
is the maximal weak solution from
$\Phi_0=\sum_{i=1}^{r_P}\pr_i^*\varphi_{i,0}
\in\PSH(X,\omega)$.  At $x=(x_1,\ldots,x_{r_P})\in X$,
\begin{equation}\label{eq:product-arnold}
 \lct(\Phi_t,x)=\min_{1\le i\le r_P}\lct(\varphi_{i,t},x_i),
 \qquad
 \arn(\Phi_t,x)=\max_{1\le i\le r_P}\arn(\varphi_{i,t},x_i).
\end{equation}
\end{theorem}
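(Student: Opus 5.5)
The plan is to work at the level of the smooth approximations, as the remark preceding the theorem suggests, and then pass to the monotone limit. I would do the flow identity first and the integrability identity second.

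\emph{Step 1: smooth level.} For each $i$, choose smooth $\omega_i$-psh functions $\varphi_{i,0,j}\downarrow\varphi_{i,0}$, and let $\varphi_{i,t,j}$ be the smooth flows. Set $\Phi_{t,j}:=\sum_i\pr_i^*\varphi_{i,t,j}$; these are smooth and $\omega$-psh. Since $\omega+\ddc\Phi_{t,j}=\sum_i\pr_i^*(\omega_i+\ddc\varphi_{i,t,j})$ is a sum of pullbacks of forms of rank $n_i$ on the factors, the multinomial expansion of the $n$-th power keeps only the term with exponent $n_i$ on the $i$-th factor. Hence
\[
(\omega+\ddc\Phi_{t,j})^n=\frac{n!}{\prod_in_i!}\bigwedge_i\pr_i^*(\omega_i+\ddc\varphi_{i,t,j})^{n_i}
=\frac{n!}{\prod_in_i!}\exp\Bigl(\sum_i\pr_i^*\dot\varphi_{i,t,j}\Bigr)\bigwedge_i\pr_i^*dV_i,
\]
which equals $\exp(\dot\Phi_{t,j})\,dV$ by \eqref{eq:product-volume}. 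So $\Phi_{t,j}$ is the smooth solution of \eqref{eq:flow-intro} on $X$ with initial datum $\Phi_{0,j}$, by uniqueness of smooth solutions. The data $\Phi_{0,j}$ decrease to $\Phi_0$, which is $\omega$-psh as a sum of pullbacks. The maximal weak solution is independent of the decreasing approximation (\cref{subsec:maximal-flow}), so this particular product approximation computes it. The sum of decreasing sequences is decreasing, and its pointwise limit is $\sum_i\pr_i^*\varphi_{i,t}$. This proves \eqref{eq:product-potential} on the common interval $[0,T)$, which I would state first for the approximants.

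\emph{Step 2: Arnold multiplicities.} I need $\lct(\sum_i u_i(x_i),x)=\min_i\lct(u_i,x_i)$ for quasi-psh $u_i$ in separate variables. On a product neighbourhood $U=\prod U_i$, Tonelli gives $\int_U e^{-q\sum u_i}=\prod_i\int_{U_i}e^{-qu_i}$. Each factor is positive because each $u_i$ is bounded above. So the left side is finite iff every factor is finite, and shrinking neighbourhoods gives the stated identity, since products of neighbourhoods are cofinal. Two degenerate cases need separate checks. If some $\lct=0$, the product integral is infinite for all $q$. If all are $+\infty$, the convention $1/(+\infty)=0$ gives the maximum formula for $\arn$. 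Taking reciprocals yields \eqref{eq:product-arnold}.

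\emph{Main obstacle.} Most of the weight is in Step 1, specifically in justifying that independence of the approximation lets me use the product approximants. An arbitrary decreasing smooth approximation of $\Phi_0$ need not split, but independence of the choice settles this. If one prefers not to cite independence, a fallback is to apply the comparison/maximum principle on compact $X$ between the smooth flows from two interlaced approximating sequences. Beyond that, I would check that the normalization constant in \eqref{eq:product-volume} exactly absorbs the multinomial coefficient, and that the factor flows exist smoothly on a common interval.
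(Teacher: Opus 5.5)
Your proposal is correct and follows essentially the same route as the paper's proof. Both use the multinomial identity on the product smooth approximants, absorbing the normalization constant of \eqref{eq:product-volume}, then smooth uniqueness and independence of the decreasing approximation in the maximal-flow construction, and finally Tonelli on cofinal product neighbourhoods, which shows that the admissible exponent set for $\Phi_t$ is the intersection of those of the factors.
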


\begin{proof}
For each $i$, choose smooth initial data
$\varphi_{i,0,j}\downarrow\varphi_{i,0}$ and denote the corresponding
smooth flows by $\varphi_{i,t,j}$.  Set
\[
 \Phi_{0,j}=\sum_i\pr_i^*\varphi_{i,0,j},
 \qquad
 \Phi_{t,j}=\sum_i\pr_i^*\varphi_{i,t,j}.
\]
The first sequence is a decreasing smooth $\omega$-psh approximation of
$\Phi_0$.  Set
\[
 A_{i,t,j}:=\pr_i^*(\omega_i+\ddc\varphi_{i,t,j}),
\]
a smooth semipositive real $(1,1)$-form on $X$.  Then
$A_{i,t,j}^{\ell}=0$ for $\ell>n_i$.  Hence a multinomial term of total
degree $n=\sum_i n_i$ is nonzero only when the exponent of every
$A_{i,t,j}$ is $n_i$.  Therefore
\begin{align}
 (\omega+\ddc\Phi_{t,j})^n
 &=\frac{n!}{\prod_i n_i!}\bigwedge_i
   \pr_i^*(\omega_i+\ddc\varphi_{i,t,j})^{n_i}\notag\\
 &=\exp\!\left(\sum_i\pr_i^*\dot\varphi_{i,t,j}\right)dV
 =\exp(\dot\Phi_{t,j})dV.\label{eq:product-smooth-flow}
\end{align}
Smooth uniqueness identifies $\Phi_{t,j}$ with the smooth flow on the
product from $\Phi_{0,j}$.  Passing to the pointwise decreasing limit and
using the maximal-flow construction recalled in
\cref{subsec:maximal-flow} proves
\eqref{eq:product-potential}, because the finite sum satisfies
$\Phi_{t,j}\downarrow\sum_{i=1}^{r_P}\pr_i^*\varphi_{i,t}$.

Fix $q>0$.  Since product neighbourhoods form a neighbourhood basis at
$x$, Tonelli's theorem shows that $\exp(-q\Phi_t)$ is locally integrable
near $x$ if and only if $\exp(-q\varphi_{i,t})$ is locally integrable near
$x_i$ for every $i$.  Indeed, on a product of sufficiently small
coordinate neighbourhoods $U_i\ni x_i$,
\begin{equation}\label{eq:product-tonelli}
 \int_{\prod_iU_i}\exp(-q\Phi_t)
 \bigwedge_{i=1}^{r_P}\pr_i^*dV_i
 =\prod_{i=1}^{r_P}\int_{U_i}\exp(-q\varphi_{i,t})dV_i.
\end{equation}
Every factor on the right is strictly positive.  Thus the admissible
exponents for $\Phi_t$ form the intersection of the admissible exponent
sets of the factors.  This proves the complex-singularity-exponent
identity in \eqref{eq:product-arnold}; taking reciprocals with
$1/(+\infty)=0$ gives the Arnold-multiplicity identity.
\end{proof}

\begin{remark}[Analytic singularities under products]
Pullback by each holomorphic projection $\pr_i$ preserves analytic
singularities.  More explicitly, suppose locally
$u_i=c_i\log\sum_\alpha|f_{i,\alpha}|^2+h_i$, where $h_i$ is smooth, and
suppose that the positive weights $c_i$ are commensurable.  Choose $c>0$
and integers $N_i\ge1$ with $c_i=N_ic$.  Then
\[
 \sum_i\pr_i^*u_i
 =c\log\prod_i\left(\sum_\alpha|f_{i,\alpha}|^2\right)^{N_i}
   +\sum_i\pr_i^*h_i,
\]
and expansion of the product expresses the argument of the logarithm as a
finite sum of squared absolute values of holomorphic monomials.  Hence the
sum has analytic singularities.  Without commensurability, the sum need not
have neat analytic singularities.  This issue does not arise below because
only one factor is singular and all remaining factors are smooth.
\end{remark}

\begin{corollary}
\label{cor:all-dimensions}
For every pair of integers $n\ge2$ and $e\ge2$, there exist a smooth
projective $n$-fold $X_n$, a K\"ahler form $\omega_n$, a smooth positive
volume form $dV_n$, an initial potential
$\Phi_0\in\PSH(X_n,\omega_n)$ with analytic singularities, and a point
$x_n\in X_n$ such that the maximal weak solution satisfies
\begin{equation}\label{eq:all-dimensional-counterexample}
 \arn(\Phi_t,x_n)=1-\frac te>1-t
 =\max\{\arn(\Phi_0,x_n)-t,0\},\qquad 0<t<1.
\end{equation}
\end{corollary}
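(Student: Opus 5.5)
The plan is to take the one-fibre Hirzebruch example of \cref{cor:exact-counterexample} and take its product with a smooth factor, using \cref{thm:product}. Fix $n\ge2$ and $e\ge2$.

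\textbf{Setup.} Put $X_1=\mathbb F_e$, with $\omega_1$, $dV_1$ and $\varphi_{1,0}$ as in \cref{cor:exact-counterexample}, so $a=1$, $k=1$ and $b_1=e+1$. By that corollary, $\varphi_{1,0}$ may be chosen with analytic singularities. If $n=2$, there is nothing further to do: take $X_2:=\mathbb F_e$ and any $x_2\in S\setminus F_1$.

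For $n\ge3$, let $X_2=\PP^{n-2}$ with the Fubini--Study form $\omega_2$. Take the volume form $dV_2:=\omega_2^{n-2}$ and the initial potential $\varphi_{2,0}\equiv0$. This factor is smooth, so its smooth flow equation $(\omega_2+\ddc\varphi_{2,t})^{n-2}=\exp(\dot\varphi_{2,t})\omega_2^{n-2}$ is solved by $\varphi_{2,t}\equiv0$ for all $t\ge0$. Smooth uniqueness shows this is the flow, and it coincides with the maximal weak solution: any decreasing smooth approximation of the smooth datum $0$ gives flows converging to the unique smooth solution. Hence $\arn(\varphi_{2,t},y)=0$ for every $y$ and every $t$.

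\textbf{Product.} Set $X_n=X_1\times X_2$ and $\omega_n=\pr_1^*\omega_1+\pr_2^*\omega_2$, and let $dV_n$ be the normalized product volume form \eqref{eq:product-volume}. Then $X_n$ is smooth projective of dimension $n$. The potential $\Phi_0=\pr_1^*\varphi_{1,0}$ has analytic singularities, by the remark following \cref{thm:product}, since only one factor is singular.

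\textbf{Arnold multiplicities.} Both factor flows are defined on $[0,1)$. \cref{thm:product} identifies the maximal weak solution as $\Phi_t=\pr_1^*\varphi_{1,t}$, and \eqref{eq:product-arnold} gives, for $x_n=(x,y)$ with $x\in S\setminus F_1$,
\[
\arn(\Phi_t,x_n)=\max\{\arn(\varphi_{1,t},x),0\}=\arn(\varphi_{1,t},x).
\]
By \eqref{eq:Q110-counterexample} this equals $1-t/e$ for $0<t<1$. At $t=0$ the same formula gives $\arn(\Phi_0,x_n)=1$, and $1-t/e>1-t=\max\{1-t,0\}$ on $(0,1)$.

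\textbf{Main obstacle.} The only delicate point is confirming that the smooth factor's maximal weak solution is identically zero and is defined on all of $[0,1)$, so that \cref{thm:product} applies on the common interval. This follows from the choice $dV_2=\omega_2^{n-2}$ together with smooth uniqueness. If one prefers to avoid the exact formula, the lower bound of \cref{thm:robust-counterexample} combined with the product identity still yields the strict inequality $\arn(\Phi_t,x_n)\ge1-t/e>1-t$.
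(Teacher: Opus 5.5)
Your proposal is correct and matches the paper's argument: for $n>2$ the paper also takes the product of the one-fibre Hirzebruch flow with a stationary smooth factor ($dV_Y=\eta^{m}$, zero initial potential) and applies \cref{thm:product}. The only difference is that you fix $Y=\PP^{n-2}$ with the Fubini--Study form, whereas the paper allows any smooth projective $Y$ of dimension $n-2$.
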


\begin{proof}
For $n=2$, use \cref{cor:exact-counterexample}.  Suppose $n>2$ and apply
\cref{thm:product} to that surface flow and a smooth projective manifold
$Y$ of dimension $m:=n-2$ with K\"ahler form $\eta$.  Set
$dV_Y:=\eta^m$ and take the initial potential on $Y$ to be zero.  Since
$(\eta+\ddc0)^m=\exp(0)dV_Y$, this factor is stationary.  On
$X_n:=\mathbb F_e\times Y$, let $\pr_1$ and $\pr_2$ be its projections,
and take $\omega_n:=\pr_1^*\omega+\pr_2^*\eta$ and
\[
 dV_n:=\frac{n!}{2!\,(n-2)!}\,
 \pr_1^*dV_X\wedge\pr_2^*dV_Y.
\]
This is the normalized volume form in \eqref{eq:product-volume}.  Set
$\Phi_0:=\pr_1^*\varphi_0$, choose $y\in Y$, and put $x_n:=(x,y)$.
The potential $\Phi_0$ has analytic singularities, and
\eqref{eq:product-arnold} gives
$\arn(\Phi_t,x_n)=\arn(\varphi_t,x)=1-t/e$ for $0<t<1$.  The initial
value is $\arn(\Phi_0,x_n)=1$, which proves
\eqref{eq:all-dimensional-counterexample}.
\end{proof}

\begin{corollary}
\label{cor:profile-envelopes}
Let $r_P\ge1$.  For $1\le i\le r_P$, choose an integer $e_i\ge2$, an integer
$k_i\ge1$, positive numbers $a_i,b_{i,1},\ldots,b_{i,k_i}$ satisfying
$\sum_{j=1}^{k_i}b_{i,j}>e_i a_i$, and a flow on
$X_i:=\mathbb F_{e_i}$ as in \cref{thm:Hirz-exact}.  Let $S_i$ be the
negative section, let $F_{i,1},\ldots,F_{i,k_i}$ be the selected fibres,
and choose
$x_i\in S_i\setminus\bigcup_{j=1}^{k_i}F_{i,j}$.  Define
\[
 \tau_i:=\min\{a_i,b_{i,1},\ldots,b_{i,k_i}\},
 \qquad \kappa_i:=\min\{k_i/e_i,1\}.
\]
Let $(\Phi_t)$ be the product flow in \cref{thm:product} and set
$\mathbf x:=(x_1,\ldots,x_{r_P})$.  For $0<t<\min_i\tau_i$,
\begin{equation}\label{eq:profile-envelope}
 \arn(\Phi_t,\mathbf x)=
 \max_{1\le i\le r_P}\{a_i-\kappa_it\}.
\end{equation}
\end{corollary}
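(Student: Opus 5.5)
The plan is to combine the product theorem, \cref{thm:product}, with the exact Hirzebruch profile, \cref{thm:Hirz-exact}, applied factor by factor. The product flow is the one built from factors $X_i=\mathbb F_{e_i}$ with the chosen $\omega_i$, $dV_i$ and $\varphi_{i,0}$. We work on the common interval $0\le t<\min_i\tau_i$.

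First I check that this interval is one on which every factor flow is defined. Maximal weak solutions of \eqref{eq:flow-intro} exist for all positive time, since the decreasing limit of smooth flows is defined on the whole maximal existence interval of the smooth approximants. That interval contains the short interval used here, because each class $\{\omega_i\}+t\{\chi_i\}$ remains Kähler for small $t$. This last point may need a small check.

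If needed, I would note that $\min_i\tau_i$ is below each $\tau_i$, and that the exact formula of \cref{thm:Hirz-exact} is only invoked for $t<\tau_i$. This is all the hypothesis of \cref{thm:product} really requires: a common interval on which the factor solutions exist as maximal weak solutions. On that interval, \cref{thm:product} identifies $\Phi_t=\sum_i\pr_i^*\varphi_{i,t}$ as the maximal weak solution from $\Phi_0$. It also gives
\[
 \arn(\Phi_t,\mathbf x)=\max_{1\le i\le r_P}\arn(\varphi_{i,t},x_i).
\]

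Next, for each $i$ the point $x_i$ lies in $S_i\setminus\bigcup_jF_{i,j}$, and $0<t<\min_i\tau_i\le\tau_i$. So the first case of \eqref{eq:Hirz-exact-formula} gives $\arn(\varphi_{i,t},x_i)=a_i-\kappa_it$. Substituting into the maximum yields \eqref{eq:profile-envelope}. Each term is positive because $\kappa_i\le1$ and $t<a_i$, so the convention $1/(+\infty)=0$ never enters.

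The only genuine obstacle is the bookkeeping in applying \cref{thm:product}. Its normalized volume form \eqref{eq:product-volume} and the Kähler form $\omega=\sum_i\pr_i^*\omega_i$ must be the data defining ``the product flow''. Since the corollary says the product flow is the one in \cref{thm:product}, this holds by definition. Everything else is a direct substitution.
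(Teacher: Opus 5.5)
Your proof is correct and is exactly the paper's argument: apply the product identity \eqref{eq:product-arnold}, then use the first case of \eqref{eq:Hirz-exact-formula} in each factor, which is valid since $t<\min_i\tau_i\le\tau_i$. Your existence-interval worry is unnecessary and slightly misphrased: $\omega_i+\ddc\varphi_{i,t}$ stays in the fixed class $\{\omega_i\}$ (not $\{\omega_i\}+t\{\chi_i\}$), so the factor flows are defined for all $t\ge0$ and the common-interval hypothesis of \cref{thm:product} holds automatically.
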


\begin{proof}
Apply \eqref{eq:product-arnold} and \cref{thm:Hirz-exact} factor by
factor.
\end{proof}

\begin{remark}[A corner profile]
Take $(e_1,k_1,a_1)=(2,2,3)$ with $b_{1,1}=b_{1,2}=4$, and take
$(e_2,k_2,a_2)=(3,1,2)$ with $b_{2,1}=7$.  The two profiles are $3-t$ and
$2-t/3$ on the common interval $0<t<2$, so their maximum changes branch
at $t=3/2$.  More generally, \cref{cor:profile-envelopes} realizes finite
maxima of the affine profiles above; these maxima are convex, decreasing,
and piecewise affine.
\end{remark}

\section*{Acknowledgements}

The main idea of this work arose from exploratory mathematical discussions
with ChatGPT Pro (OpenAI), using the GPT-5.6 Sol model.  All mathematical arguments,
references, and conclusions were independently verified by the author, who
assumes full responsibility for the contents of the paper.



\begin{thebibliography}{DNGL26b}

\bibitem[BEGZ10]{BEGZ10}
S{\'e}bastien Boucksom, Philippe Eyssidieux, Vincent Guedj, and Ahmed Zeriahi.
\newblock Monge--{Amp{\`e}re} equations in big cohomology classes.
\newblock {\em Acta Math.}, 205(2):199--262, 2010.

\bibitem[BG14]{BG14}
Robert~J. Berman and Henri Guenancia.
\newblock {K}{\"a}hler--{E}instein metrics on stable varieties and log
  canonical pairs.
\newblock {\em Geom. Funct. Anal.}, 24(6):1683--1730, 2014.

\bibitem[BKS04]{BKS04}
Thomas Bauer, Alex K{\"u}ronya, and Tomasz Szemberg.
\newblock Zariski chambers, volumes, and stable base loci.
\newblock {\em J. Reine Angew. Math.}, 576:209--233, 2004.

\bibitem[Bou04]{Bou04}
S{\'e}bastien Boucksom.
\newblock Divisorial {Z}ariski decompositions on compact complex manifolds.
\newblock {\em Ann. Sci. {\'E}c. Norm. Sup{\'e}r. (4)}, 37(1):45--76, 2004.

\bibitem[DDL18]{DDL18}
Tam{\'a}s Darvas, Eleonora Di~Nezza, and Chinh~H. Lu.
\newblock On the singularity type of full mass currents in big cohomology
  classes.
\newblock {\em Compos. Math.}, 154(2):380--409, 2018.

\bibitem[Dem12]{Dem-book}
Jean-Pierre Demailly.
\newblock Complex analytic and differential geometry.
\newblock Online monograph, 2012 version, 2012.
\newblock Available from the
\href{https://www-fourier.univ-grenoble-alpes.fr/~demailly/manuscripts/agbook.pdf}{author's website}.

\bibitem[DNFT17]{DFT17}
Eleonora Di~Nezza, Enrica Floris, and Stefano Trapani.
\newblock Divisorial {Z}ariski decomposition and some properties of full mass
  currents.
\newblock {\em Ann. Sc. Norm. Super. Pisa Cl. Sci. (5)}, 17(4):1383--1396,
  2017.

\bibitem[DNGL26a]{DGL26geometry}
Eleonora Di~Nezza, Vincent Guedj, and Chinh~H. Lu.
\newblock Geometric smoothing by the {K}{\"a}hler--{R}icci flow, 2026.
\newblock arXiv:2607.08325v1.

\bibitem[DNGL26b]{DGL26cusps}
Eleonora Di~Nezza, Vincent Guedj, and Chinh~H. Lu.
\newblock {K}{\"a}hler--{R}icci flow: from divisors to cusps, 2026.
\newblock arXiv:2606.15816v1.

\bibitem[DNL17]{DNL17}
Eleonora Di~Nezza and Chinh~H. Lu.
\newblock Uniqueness and short time regularity of the weak
  {K}{\"a}hler--{R}icci flow.
\newblock {\em Adv. Math.}, 305:953--993, 2017.

\bibitem[GZ17a]{GZ17a}
Vincent Guedj and Ahmed Zeriahi.
\newblock Regularizing properties of the twisted {K}{\"a}hler--{R}icci flow.
\newblock {\em J. Reine Angew. Math.}, 729:275--304, 2017.

\bibitem[GZ17b]{GZ17b}
Vincent Guedj and Ahmed Zeriahi.
\newblock {\em Degenerate complex {M}onge--{A}mp{\`e}re equations}.
\newblock EMS Tracts in Mathematics, vol.~26. European Mathematical Society
  (EMS), Z\"urich, 2017.

\bibitem[Har77]{Hartshorne}
Robin Hartshorne.
\newblock {\em Algebraic Geometry}, volume~52 of {\em Graduate Texts in
  Mathematics}.
\newblock Springer, New York, 1977.

\bibitem[Siu74]{Siu74}
Yum-Tong Siu.
\newblock Analyticity of sets associated to {L}elong numbers and the extension
  of closed positive currents.
\newblock {\em Invent. Math.}, 27:53--156, 1974.

\end{thebibliography}
\end{document}